\newcommand{\uu}[1]{\mathbf{#1}}
\newcommand{\norm}[2]{\left\Vert {#1} \right\Vert_{#2}}
\newcommand{\avg}[1]{{\{\!\!\{ #1 \}\!\!\}}}
\newcommand{\cD}{{\mathcal D}}
\newcommand{\dx}{\, \mathsf{d}x}
\newcommand{\ds}{\, \mathsf{d}s}
\newcommand{\F}{\mathsf{F}}
\newcommand{\E}{\mathsf{E}}
\newcommand{\pp}{\mathsf{p}}
\newcommand{\dprod}[1]{\left<#1\right>}
\renewcommand{\u}{u^\star}
\newcommand{\T}{\mathcal{T}}
\newcommand{\refmesh}{\T_{\kappa,{\tt ref}}^{\mathcal N}}
\newcommand{\p}{\bm p}
\newcommand{\V}{\mathcal{V}(\T,\p)}
\newcommand{\Vo}{\mathcal{V}_0(\T,\p)}
\newcommand{\VD}{\mathcal{V}(\T_{\cD},\p_{\cD})}
\newcommand{\VDc}{\mathcal{V}(\T^{\prime}_{\cD},\p^{\prime}_{\cD})}
\newcommand{\VDo}{\mathcal{V}_0(\T_{\cD},\p_{\cD})}
\newcommand{\Vn}{\mathcal{V}(\T_n,\p_n)}
\newcommand{\uhp}{\u_{\rm hp}}
\newcommand{\uhpc}{u^{\star,\prime}_{\rm hp}}
\newcommand{\refspace}{{\mathcal V}(\refmesh ,\p_{\tt ref})}
\newcommand{\refspaceo}{{\mathcal V}_0(\refmesh ,\p_{\tt ref})}
\newcommand{\uref}{\u_{\kappa,{\tt ref}}}
\newcommand{\pspace}{{\mathcal V}(\T_{\kappa}^{\mathcal N},\p_{\rm p})}
\newcommand{\pspaceo}{{\mathcal V}_0(\T_{\kappa}^{\mathcal N},\p_{\rm p})}
\newcommand{\up}{\u_{\kappa,{\rm p}}}
\newcommand{\hpspace}{{\mathcal V}(\refmesh ,\p_{{\rm hp}_i})}
\newcommand{\hpspaceo}{{\mathcal V}_0(\refmesh ,\p_{{\rm hp}_i})}
\newcommand{\uhpi}{\u_{\kappa,{\rm hp}_i}}
\newcommand{\uloc}{\u_{\kappa,{\tt loc}}}
\renewcommand{\P}{\mathfrak{P}}
\newtheorem{assumption}{Assumption}
\newcommand{\myState}[1]{\State\parbox[t]{\dimexpr\linewidth-\algorithmicindent}{#1\strut}}
\newcommand{\myStateDouble}[1]{\State\parbox[t]{\dimexpr\linewidth-\algorithmicindent-\algorithmicindent}{#1\strut}}
\newtheorem{Remark}[equation]{Remark}
\newtheorem{proposition}[equation]{Proposition}
\numberwithin{equation}{section}
\numberwithin{equation}{section}
\title[$hp$--Adaptive Energy Minimisation]{Adaptive Energy Minimisation for $hp$--Finite Element Methods}
\author[P. Houston]{Paul Houston}
\address{School of Mathematical Sciences, University of Nottingham,
  University Park, Nottingham, NG7 2RD,
  UK}
\email{Paul.Houston@nottingham.ac.uk}
\thanks{PH acknowledges the financial support of the Leverhulme Trust.}
\author[T. P. Wihler]{Thomas P.~Wihler}
\address{Mathematisches Institut, Universit\"at Bern, Sidlerstrasse 5,
  CH-3012 Bern, Switzerland}
\email{wihler@math.unibe.ch}
\thanks{TW acknowledges the financial support by the Swiss National Science Foundation (SNF)}
\keywords{Convex variational problems, $hp$--finite element methods, $hp$--adaptivity, quasilinear partial differential equations.}
\subjclass[2010]{65N30}
\begin{document}

\maketitle

%%%%% Begin Abstract %%%%%%%%%%%
\begin{abstract}
This article is concerned with the numerical solution of convex 
variational problems. More precisely, we develop an iterative minimisation 
technique which allows for the successive enrichment of an underlying 
discrete approximation space in an adaptive manner. Specifically, we outline 
a new approach in the context of $hp$--adaptive finite element methods 
employed for the efficient numerical solution of linear and nonlinear second--order 
boundary value problems. Numerical
experiments are presented which highlight the practical performance of this 
new $hp$--refinement technique for both one-- and two--dimensional
problems.
\end{abstract}
%%%%% end %%%%%%%%%%%

%%%% Start %%%%%%

\section{Introduction}

Over the last few decades, tremendous progress has been made on both the mathematical analysis and practical 
application of finite element methods to a wide range of problems of industrial importance. In particular, 
significant contributions have been made in the area of {\em a posteriori} error estimation and automatic
mesh adaptation. 
For recent surveys and historical background, 
we refer to \cite{Ainsworth-Oden,BR-Acta,Johnson-et-al,sh02,Sz-B,Verfurth96},
and the references cited therein. Here, adaptive methods seek to automatically 
enrich the underlying finite element space, from which the numerical solution is sought, in order to compute efficient and reliable numerical 
approximations. The standard approach used within much of the literature is to simply undertake local isotropic refinement 
of the elements ($h$--refinement). However, in recent years,
so-called $hp$--adaptive finite element methods have been devised, whereby both local subdivision of the elements and 
local polynomial-degree-variation ($p$--refinement) are employed. These ideas date back to the work by
Babu\v{s}ka and co-workers (cf. \cite{BG88,BabuskaSuri87,BabuskaSuri89,BabuskaSuri94}); see also the
recent books \cite{De07,KS,Schwab98,opac-b1101124}, and the references cited therein.
The exploitation of general $hp$--refinement strategies 
can produce remarkably efficient methods with high algebraic or even exponential rates of convergence. Moreover, such approaches can also be 
combined with anisotropic refinement techniques in order to efficiently approximate problems with sharp transition features. 
These techniques enable the user to perform accurate and reliable computational simulations without excessive computing 
resources, and with the confidence that complex local features of the underlying solution are accurately captured.

Many physical processes can be modelled by locating critical points of a given (in our setting, convex) energy functional, over an admissible space of functions; a typical example includes quasilinear partial differential equations (PDEs). In this article we consider the application of adaptive finite element techniques, employing a combination of $hp$--mesh refinement, to problems of this type. 
In particular, we consider a new and widely applicable paradigm for adaptive mesh generation within which we directly seek to construct the $hp$--finite element space in order to approximate the critical point of the underlying energy functional associated to the problem of interest. The simplest such example is the one--dimensional Poisson equation on the interval $(0,1)$, subject to a load $f$; in this case, we seek to minimise $\E(u) = \nicefrac12 \int_0^1 u_x^2 \dx - \int_0^1fu \dx$ over an appropriate solution space $V$ (which naturally incorporates the boundary conditions). The corresponding standard Galerkin finite element approximation of this problem automatically inherits the same energy minimisation property with respect to the underlying finite element space $V_h\subset V$, i.e., the finite element solution is the unique minimiser of $\E(\cdot)$ over $V_h$. With this idea in mind, a natural approach is to adaptively modify the finite element space $V_h$ in a manner which seeks to directly decrease the energy $\E$, i.e., denoting the new finite element solution and finite element space by $u_h'$ and $V_h'$, respectively, we require that $\E(u_h') \leq \E(u_h)$. By considering an appropriately defined elementwise energy functional $\tilde{\E}^\prime_\kappa$, with $\kappa$ denoting the current element in the underlying computational mesh, we devise a competitive refinement strategy which marks elements for refinement. More precisely, in the context of our one--dimensional example, consider an $h$--refinement strategy which subdivides each element into two sub-elements. We may then compute the numerical solution to a local finite element problem posed on a local patch of elements which includes the two sub-elements. On the basis of this local reference approximation, we may then determine the predicted (elemental) energy loss if the proposed refinement (i.e., a bisection of each element into two sub-elements) is undertaken. Once the predicted energy loss has been computed for all elements in the mesh, a percentage of elements with the largest predicted energy loss may be identified and subsequently refined. This idea naturally extends to the $p$--refinement setting, whereby, additional higher--order modes are used to locally enrich the finite element solution elementwise. With this in mind, we propose a competitive $hp$--adaptive refinement strategy which computes the maximal predicted energy loss on each element based on comparing a $p$--refinement of each element (i.e., an isotropic increase of the elemental polynomial degrees by~1) with a collection of $h$--refinements of the same element (featuring different local polynomial degree distributions), which are selected so as to lead to the same increase in the number of degrees of freedom associated with the current element as the $p$--enrichment, cf. \cite{De07,Demkowicz:2002:FAH:608985.608996,opac-b1101124,ROD89}. A key aspect of this algorithm is the computation of a local elementwise/patchwise reference solution needed for the definition of $\tilde{\E}^\prime_\kappa$. In order to illustrate the key ideas, for the purposes of this article, we restrict our discussion to convex optimisation problems posed on a computational domain $\Omega \subset {\mathbb R}^d$, $d \geq 1$. However, we point out that this strategy is completely general in the sense that it can be applied to any physical problem which may be modelled as a critical point of a given energy functional $\E$ (including saddle point problems, see, e.g., ~\cite{Rabinowitz:86}). In particular, one of the key advantages of our proposed approach is that it naturally facilitates the use of $hp$--mesh adaptation, and indeed even anisotropic $hp$--mesh refinement. By considering an enrichment of the finite element space locally using any combination of isotropic/anisotropic $h$--/$p$--refinement, an element $\kappa$ can be refined according to the refinement which leads to the maximal predicted energy loss. This is in contrast to standard adaptive techniques, whereby elements are marked for refinement according to the size of a local {\em a posteriori} error indicator. Indeed, in this latter setting, such indicators rarely contain information concerning how the local finite element space should be enriched, but only indicate that a refinement should be performed. Thereby, alternative numerical techniques must be devised which are capable of determining the direction of refinement (for anisotropic refinement) or the type of refinement ($h$-- or $p$--). For the latter case, such strategies include regularity estimation \cite{eibner-melenk,HoustonSuliHPADAPT,Ma94,W10}, use of {\em a priori} knowledge \cite{bfo01,vo00}, and the computation of reference solutions within competitive refinement strategies \cite{AiSe98,De07,DoHe07,ghh-hpfem-part-II,giani_houston_compflow_2012,MelenkAPOST01,OdPaFe92,ROD89,opac-b1101124}, for example. This latter class of methods, cf. in particular \cite{De07,Demkowicz:2002:FAH:608985.608996,ROD89,opac-b1101124} and \cite{ghh-hpfem-part-II,giani_houston_compflow_2012}, are very much in the spirit of the proposed competitive refinement algorithm developed in this article. Finally, we refer to \cite{mitchell_mcclain} for an extensive review and comparison of many of the $hp$--adaptive refinement techniques proposed within the literature. 

This article is structured as follows. In Section~\ref{sc:ABSTRACT} we briefly present an abstract framework for variational problems, and consider an application to quasilinear partial differential equations. Subsequently, in Section~\ref{sc:HPFEM}, the $hp$-version finite element discretisation of such problems is presented, and a new $hp$-adaptivity approach is developed in detail. The theory will be illustrated with a number of numerical experiments on linear and quasilinear boundary value problems in Section \ref{sec-numerics}. Finally, in Section~\ref{sc:concl} we summarise the work presented in this article and draw some conclusions.

Throughout this article, we let~$L^p(D)$, $p\in[1,\infty]$, be the standard Lebesgue space on some bounded domain~$D$, with boundary $\partial D$, equipped with the norm $\|\cdot\|_{L^p(D)}$. Furthermore, for~$k\in\mathbb{N}$, we write $W^{k,p}(D)$ to signify the Sobolev space of order~$k$, endowed with the norm~$\|\cdot\|_{W^{k,p}(D)}$ and seminorm~$|\cdot|_{W^{k,p}(D)}$. For $p=2$ we write $H^k(D)$ in lieu of $W^{k,2}(D)$; moreover, $H^1_0(D)$ denotes the subspace of $H^1(D)$ of functions with zero trace on $\partial D$.

\section{Variational Problems}\label{sc:ABSTRACT}

In this section we outline an abstract framework for variational problems in Banach spaces, and consider an application to quasilinear boundary value problems.

\subsection{Abstract Minimisation Problem}\label{sc:abstract}
On a real reflexive Banach space~$X$ let us consider the minimisation
problem
\begin{equation}\label{eq:var}
\min_{u\in X} \E(u) \equiv \min_{u\in X}\left\{\F(u)-\dprod{l,u}\right\}.
\end{equation}
Here, $\dprod{\cdot,\cdot}$ is the duality product on~$X\times X'$,
where~$X'$ signifies the dual space of~$X$, and~$l\in X'$ is
given. Furthermore, throughout this manuscript, we suppose
that~$\F:\,X\to\mathbb{R}$ is a continuous and strictly convex
functional on~$X$, i.e.,
\[
\F(v_1+t(v_2-v_1))<\F(v_1)+t(\F(v_2)-\F(v_1))\qquad\forall t\in[0,1] ~~ \forall
v_1,v_2\in X.
\]
In addition, we make the assumption that~$F$ satisfies the coercivity type condition
\begin{equation}\label{eq:Fcoerc}
\F(u)-\dprod{l,u}\to+\infty\qquad\textrm{as }\|u\|_X\to\infty,
\end{equation}
where~$\|\cdot\|_X$ is a norm on~$X$. Then, \eqref{eq:var} possesses a unique
minimiser~$\u\in X$; see, e.g.,
\cite[Corollary~42.14]{Ze85}. Furthermore, the problem of finding~$\u\in X$
can be written in weak form as
\[
\dprod{\F'(\u),v}=\dprod{l,v}\qquad\forall v\in X,
\]
provided that $\F$ has a sufficiently regular G\^{a}teaux derivative~$\F'$.

\subsection{Model Problem} 

We now consider a specific application of the above abstract setting. To this end,
let $\Omega\subset {\mathbb R}^d$, $d\geq 1$, be an open bounded Lipschitz domain
with boundary $\partial\Omega$. We consider the following quasilinear
partial differential equation:
\begin{equation}\label{eq:model}
\begin{split}
-\nabla \cdot (\mu'(\nabla \u))+g'(\u) &=f,\qquad \mbox{ in } \Omega,\\
\u&=0,\qquad \mbox{ on } \partial\Omega.
\end{split}
\end{equation}
Here, $f=f(x)$, $\mu=\mu(\nabla u)$, and $g=g(u)$ are given functions,
and~$\u=\u(x)$ is the unknown analytical solution. 
We suppose that~$f\in L^q(\Omega)$, for some~$q>1$. The corresponding variational problem reads:
\begin{equation}\label{eq:varbvp}
\min_{u\in X} \E(u):= \min_{u\in X}\int_\Omega \left\{\mu(\nabla u)+g(u)-fu\right\}\dx,
\end{equation}
where~$X=W^{1,p}_0(\Omega)$ for some suitable~$p>1$.

With this notation, the following proposition holds.

\begin{proposition}
Let~$\mu$ and~$g$ from~\eqref{eq:varbvp} be strictly convex and convex, respectively, and both continuous on~$\mathbb{R}^d$ and $\mathbb{R}$, respectively. Furthermore, suppose that, for some constants~$C_1,C_2>0$, $p>1$, and~$c_1,c_2\in\mathbb{R}$ the lower bounds hold,
\begin{align}
\mu({\boldsymbol{\xi}})&\ge C_1|\boldsymbol{\xi}|^p,\label{eq:coercive}\\
g(\eta)&\ge c_1\eta+c_2,\label{eq:glow}
\end{align}
as well as the growth conditions
\begin{align}
\mu(\boldsymbol{\xi})&\le C_2(1+|\boldsymbol{\xi}|^p),\label{eq:mu1}\\
g(\eta)&\le C_2(1+|\eta|^p),\label{eq:g1}
\end{align}
for any~$\boldsymbol{\xi}\in\mathbb{R}^d$ and any $\eta\in\mathbb{R}$. Then, for any given~$f\in L^q(\Omega)$, where~$\nicefrac{1}{p}+\nicefrac{1}{q}=1$, \eqref{eq:varbvp} has a unique solution in~$X=W^{1,p}_0(\Omega)$ as well as on any linear subspace of~$X$.
\end{proposition}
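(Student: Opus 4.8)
The plan is to verify the hypotheses of the abstract existence result (Corollary 42.14 in Zeidler cited above) for the specific functional $\E(u)=\int_\Omega\{\mu(\nabla u)+g(u)-fu\}\dx$ on $X=W^{1,p}_0(\Omega)$. Concretely, we must show three things: that $\E$ is well-defined and finite on $X$, that $\E$ is strictly convex, and that $\E$ is coercive in the sense of \eqref{eq:Fcoerc}. Since the abstract result then yields a unique minimiser, and since every hypothesis is inherited by linear subspaces, the statement on subspaces follows immediately once it is established on all of $X$.

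First I would check that $\E$ is finite-valued on $X$. Using the growth conditions \eqref{eq:mu1} and \eqref{eq:g1}, one bounds $\int_\Omega\mu(\nabla u)\dx\le C_2(|\Omega|+\|\nabla u\|_{L^p(\Omega)}^p)$ and similarly $\int_\Omega g(u)\dx\le C_2(|\Omega|+\|u\|_{L^p(\Omega)}^p)$, both finite for $u\in W^{1,p}_0(\Omega)$; the load term $\int_\Omega fu\dx$ is controlled by H\"older's inequality, $|\int_\Omega fu\dx|\le\|f\|_{L^q(\Omega)}\|u\|_{L^p(\Omega)}$, which is finite precisely because $\nicefrac1p+\nicefrac1q=1$. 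This same H\"older estimate identifies the linear functional $\dprod{l,u}=\int_\Omega fu\dx$ as an element of $X'$, so the problem fits the abstract template \eqref{eq:var} with $\F(u)=\int_\Omega\{\mu(\nabla u)+g(u)\}\dx$.

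Next I would establish strict convexity. Since $\mu$ is strictly convex and $g$ is convex on their respective arguments, and since the maps $u\mapsto\nabla u$ and $u\mapsto u$ are linear, the integrands are convex in $u$; integration preserves convexity, so $\int_\Omega g(u)\dx$ is convex and $\int_\Omega\mu(\nabla u)\dx$ is convex. Strictness requires a little care: strict convexity of $\mu$ gives strict inequality in the gradient term whenever $\nabla u_1\neq\nabla u_2$ on a set of positive measure, and for $u_1\neq u_2$ in $W^{1,p}_0(\Omega)$ the gradients must indeed differ on a positive-measure set (by the Poincar\'e inequality, or simply because a zero-trace function with vanishing gradient is zero). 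Subtracting the linear term $\dprod{l,\cdot}$ does not affect strict convexity. Hence $\E$ is strictly convex, which in turn guarantees uniqueness of the minimiser.

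The main obstacle is coercivity. Here I would use the lower bounds \eqref{eq:coercive} and \eqref{eq:glow} to write
\begin{equation*}
\E(u)\ge C_1\|\nabla u\|_{L^p(\Omega)}^p+\int_\Omega(c_1u+c_2)\dx-\int_\Omega fu\dx,
\end{equation*}
and then absorb the lower-order linear terms. The term $\int_\Omega c_2\dx=c_2|\Omega|$ is a harmless constant, while $\int_\Omega(c_1-f)u\dx$ is bounded below by $-\|c_1-f\|_{L^q(\Omega)}\|u\|_{L^p(\Omega)}$ via H\"older; applying the Poincar\'e--Friedrichs inequality $\|u\|_{L^p(\Omega)}\le C_P\|\nabla u\|_{L^p(\Omega)}$ (valid on $W^{1,p}_0(\Omega)$) converts this into a bound of the form $-C\|\nabla u\|_{L^p(\Omega)}$. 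Since $\|\nabla u\|_{L^p(\Omega)}$ is an equivalent norm on $W^{1,p}_0(\Omega)$, we obtain $\E(u)\ge C_1\|u\|_X^p-C\|u\|_X-c_2|\Omega|$ up to norm equivalence constants, and because $p>1$ the leading $p$-th power dominates the linear term, forcing $\E(u)\to+\infty$ as $\|u\|_X\to\infty$. With finiteness, strict convexity, and coercivity in hand, the abstract Corollary 42.14 delivers the unique minimiser on $X$; since any linear subspace of $X$ is itself a reflexive Banach space (closed subspaces inherit reflexivity) on which $\F$ remains continuous, strictly convex, and coercive, the identical argument yields existence and uniqueness on each subspace.
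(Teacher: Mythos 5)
Your overall strategy is the same as the paper's: both arguments verify the hypotheses of the cited abstract result (Zeidler, Corollary~42.14) by checking convexity and coercivity of the functional on the reflexive space $X=W^{1,p}_0(\Omega)$, and your coercivity argument --- lower bounds \eqref{eq:coercive} and \eqref{eq:glow}, H\"older, Poincar\'e--Friedrichs, and the fact that the $p$-th power dominates the linear term since $p>1$ --- is essentially identical to the one in the paper. Your strict-convexity discussion is actually more careful than the paper's one-line remark, and whether the load term $\int_\Omega fu\dx$ is placed in $\dprod{l,\cdot}$ (as you do) or absorbed into $\F$ with $l=0$ (as the paper does) is immaterial.

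There is, however, one genuine omission: the continuity of $\F$. The abstract framework of Section~\ref{sc:abstract} assumes $\F$ is \emph{continuous} and strictly convex --- continuity (together with convexity) is what delivers the weak sequential lower semicontinuity that the direct method needs --- and you replace this hypothesis by the strictly weaker observation that $\F$ is finite on $X$. Finiteness of a convex functional on an infinite-dimensional Banach space does not imply continuity: a discontinuous linear functional is convex and everywhere finite. The paper closes this step by invoking the continuity of the Nemyckii operators $\boldsymbol{\zeta}\mapsto\mu(\boldsymbol{\zeta})$ from $[L^p(\Omega)]^d$ to $L^1(\Omega)$ and $\eta\mapsto g(\eta)$ from $L^p(\Omega)$ to $L^1(\Omega)$, which is precisely where the growth conditions \eqref{eq:mu1} and \eqref{eq:g1} enter. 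Your own estimates would also suffice with one additional sentence: they show that $\F$ is bounded above on bounded sets, and a finite convex functional that is locally bounded above is locally Lipschitz, hence continuous. Either repair completes the proof; the closing remark that (closed) linear subspaces inherit reflexivity, so that the identical argument applies there, is correct and slightly more explicit than the paper, which does not comment on the subspace case at all.
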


\begin{proof}
Let us define
\[
u\mapsto \F(u):=\int_\Omega\left\{\mu(\nabla u)+g(u)-fu\right\}\dx.
\]
Then, we can cast~\eqref{eq:varbvp} into the abstract framework of~\eqref{eq:var}, with~$l=0$ in~$X'$. We check the conditions from Section~\ref{sc:abstract} separately. To this end, we follow the proof presented
in~\cite[Example~42.15]{Ze85}.
\begin{enumerate}[(i)]
\item {\em Continuity of~$\F$:} Let us consider a sequence~$\{u_n\}\subset W^{1,p}_0(\Omega)$, with a limit~$\overline u\in W^{1,p}_0(\Omega)$, i.e., $u_n\to\overline u$ as~$n\to\infty$. Then, with~\eqref{eq:mu1}, the Nemyckii operator $\boldsymbol{\zeta} \mapsto\mu(\boldsymbol{\zeta} )$ is continuous from~$[L^p(\Omega)]^d$ to~$L^1(\Omega)$; see~\cite[Proposition~26.6]{Ze90}. Therefore,
\[
\left|\int_\Omega \{\mu(\nabla\overline u)-\mu(\nabla u_{n})\}\dx\right|
\le \|\mu(\nabla\overline u)-\mu(\nabla u_{n})\|_{L^1(\Omega)}\to 0
\]
as~$n\to\infty$. Similarly, using~\eqref{eq:g1}, as~$n\to\infty$, we have that
\[
\left|\int_\Omega \{g(u)-g(u_{n})\}\dx\right|\to 0.
\]
The continuity of~$u\mapsto\int_\Omega fu\dx$ follows from~$f\in L^q(\Omega)$ and from H\"older's inequality. Thus, $\F$ is continuous.
\item {\em Strict convexity of~$\F$:} This simply follows from the strict convexity of~$\mu$ and the convexity of~$g$.
\item {\em Coercivity:} According to~\eqref{eq:coercive} and~\eqref{eq:glow}, we find that
\begin{align*}
\F(u)&\ge C_1|u|^p_{W^{1,p}(\Omega)}+\int_\Omega \{g(u)-fu\}\dx\\
&\ge C_1|u|^p_{W^{1,p}(\Omega)}+\int_\Omega \{c_1u+c_2-fu\}\dx\\
&\ge C_1|u|^p_{W^{1,p}(\Omega)}-|c_1|\|u\|_{L^1(\Omega)}+c_2|\Omega|-\left|\int_\Omega fu\dx\right|,
\end{align*}
where~$|\Omega|$ signifies the volume of~$\Omega$. Employing the Poincar\'e-Friedrich's 
and H\"older's inequalities, we arrive at
\begin{align*}
\F(u)&\ge C\|u\|^p_{W^{1,p}(\Omega)}-(\widetilde c_1+\|f\|_{L^q(\Omega)})\|u\|_{L^p(\Omega)}-\widetilde c_2\\
&\ge C\|u\|^p_{W^{1,p}(\Omega)}-(\widetilde c_1+\|f\|_{L^q(\Omega)})\|u\|_{W^{1,p}(\Omega)}-\widetilde c_2,
\end{align*}
for some constants~$\widetilde c_1,\widetilde c_2>0$ depending on~$\Omega$. Therefore, it follows that
\[
\E(u)=\F(u)-\dprod{l,u}\equiv \F(u)\to\infty,
\]
with~$\|u\|_{W^{1,p}(\Omega)}\to\infty$. This is the coercivity condition~\eqref{eq:Fcoerc}.
\end{enumerate}
The result now follows from~\cite[Corollary~42.14]{Ze85}.
\end{proof}

\section{$hp$-Finite Element Discretisation}\label{sc:HPFEM}
Consider now a linear subspace~$X_n\subset X$
with $\dim(X_n)=n<\infty$. Then, by our previous assumptions on~$\F$,
solving the finite dimensional convex optimisation problem $\min_{u\in X_n} \E(u)$
for the unique minimiser~$\u_n\in X_n$ results in an approximation
of~$\u\in X$ from~\eqref{eq:var} with~$\u_n\approx\u$. This is the
well-known Ritz method. Equivalently, in weak form, we may seek
$\u_n\in X_n$ such that the Galerkin formulation
\[
\dprod{\F'(\u_n),v}= \dprod{l,v}\qquad\forall v\in X_n
\]
is satisfied; cf.~\cite[\S~42.5]{Ze85}.

For the purposes of discretising our model problem~\eqref{eq:model}, we will focus on an $hp$--finite element approach. To this end, let us first introduce some notation: We let $\T=\{\kappa\}$ be a subdivision of the computational domain
$\Omega$ into disjoint open simplices such that 
$\overline{\Omega}=\bigcup_{\kappa\in\T} \overline{\kappa}$ and denote by $h_\kappa$ the
diameter of $\kappa\in\T$; i.e., $h_\kappa=\mbox{diam}(\kappa)$. 
In addition, to each element~$\kappa\in\T$ we associate a
polynomial degree~$p_\kappa$, $p_\kappa\geq 1$, and collect the
$p_\kappa$ in the polynomial degree vector~$\p=[p_\kappa:\kappa\in\T]$. With
this notation we define the $hp$--finite element spaces by
\begin{align*}
\V&=\left\{v\in H^1(\Omega):\,
v|_{\kappa}\in\mathbb{P}_{p_\kappa}(\kappa)~~\forall \kappa\in\T \right\}, \\
\Vo &= \V\cap H^1_0(\Omega),
\end{align*}
where, for~$p\ge 1$, we denote by~$\mathbb{P}_p(\kappa)$ the space of
polynomials of total degree~$p$ on~$\kappa$.

The $hp$--version finite element approximation of 
the variational formulation~\eqref{eq:var} is given by:
Find the numerical approximation~$\uhp\in\Vo$ such that
\[
\E(\uhp)=\min_{u\in \Vo} \E(u),
\]
where~$\E$ is defined in~\eqref{eq:varbvp}, or equivalently, provided that the (weak) derivatives~$\mu'$ and~$g'$ belong to~$L^1_{\rm loc}(\Omega)$, in weak form:
Find $\uhp\in\Vo$ such that
\begin{equation}\label{eq:hpFEM}
a_\Omega (\uhp,v)=\ell_\Omega (v) \qquad\forall v\in\Vo.
\end{equation}
Here,
\begin{align*}
a_\Omega (w,v) &:= \int_\Omega
\left\{\mu'(\nabla w) \cdot \nabla v+g'(w)v\right\}\dx,&&w,v\in\V, \\
\ell_\Omega (v) &:= \int_\Omega fv \dx,
&& v\in\V. 
\end{align*}
This is the $hp$--finite element discretisation of~\eqref{eq:model}.
%In a similar fashion, writing $\F$ and $\dprod{\cdot,\cdot}$ in the following elementwise manner
%$$
%\F(v) \equiv \sum_{K\in\T} \F_K(v), \qquad \dprod{l,v} \equiv \sum_{K\in\T} \dprod{l,v}_K,
%$$
%for $v\in \V$, we may write the energy functional $\E$ as follows
%$$
%\E(v) = \sum_{K\in\T} \E_K(v),
%$$
%where $\E_K(v) = \F_K(v)-\dprod{l,v}_K$.

\section{$hp$--Adaptivity}

The goal of this section is to design a procedure that generates sequences of
$hp$--adaptively refined finite element spaces in such a manner as to minimise the
error in the computed energy functional $\E$. In the context of $hp$--version finite element methods, 
the {\em local} finite element space on a given element $\kappa$, $\kappa\in\T$,
%~$\mathbb{P}_{p_\kappa}(\kappa)$, 
%for some element~$\kappa\in\T$, 
may be enriched in a number of ways. In particular,
traditional $hp$--adaptive finite element methods typically make a choice between either:
\begin{itemize}
\item $p$--refinement: The local polynomial degree~$p_\kappa$ on~$\kappa$ 
is increased by a given increment, $p_{\tt inc}$: $p_{\kappa}\leftarrow p_\kappa+p_{\tt inc}$.
Typically, a value of $p_{\tt inc}=1$ is selected.
\item $h$--refinement: The element~$\kappa$ is divided into a set of~$n_\kappa$ new 
sub-elements, such that $\overline{\kappa}=\bigcup_{i=1}^{n_{\kappa}} \overline{\kappa}_i$. Here,
 $n_{\kappa}$ will depend on both the type of element to be refined, and the type of 
 refinement employed, i.e., isotropic/anisotropic. For isotropic  refinement of a 
 triangular element $\kappa$ in two--dimensions, we have $n_{\kappa} = 4$. 
 The polynomial degree may then be inherited from the parent element~$\kappa$,
i.e., we set $p_{\kappa_i}=p_\kappa$, for $i=1,\ldots,n_{\kappa}$.
\end{itemize}

Motivated by the work presented in \cite{ROD89}, cf. also 
\cite{De07,Demkowicz:2002:FAH:608985.608996,opac-b1101124}, for example, 
in this article, 
we consider a competitive refinement strategy, whereby on each element $\kappa$ in
$\T$, we estimate the predicted reduction in the {\em local contribution} to the
energy functional $\E$ based on either employing $p$--refinement, with $p_{\tt inc}=1$,
together with a series of $hp$--refinements, which lead to the same number of degrees
of freedom as the $p$--enrichment. In contrast to standard $h$--refinement, where
the subdivided elements inherit the polynomial degree of their parent, cf. above,
in this latter case, the distribution of the polynomial degrees on
the resulting sub-elements is possibly non-uniform.

\subsection{Motivation}

The key to the forthcoming $hp$--refine\-ment strategy is to estimate the predicted
reduction in the energy functional locally on each element in the finite element
mesh $\T$. With this in mind, we must first rewrite $\E$ as the sum of local contributions
on $\T$. Given that $\E$ is simply defined as an integral over $\Omega$, then clearly, we may
write
$$
\E(v) = \sum_{\kappa\in\T} \E_\kappa(v),
$$
where $\E_\kappa$ is defined in an analogous fashion to $\E$, with the integrals over $\Omega$
being restricted to integrals over $\kappa$, $\kappa\in\T$.

However, while the above definition of the local energy functionals $\E_\kappa$ seems
entirely natural, there is no guarantee that the computed error will converge
optimally based on locally minimising $\E_\kappa$ over each $\kappa$ in $\T$. In
order to investigate this issue further and to motivate the idea proposed in this article, let us consider the following second--order linear self-adjoint partial differential equation: Find $\u$ such that
\[
-\Delta \u + \u = f \quad \mbox{in } \Omega,\qquad
\u = 0 \quad \mbox{on } \partial\Omega.
\]
Thereby, we have that 
$\mu(\nabla u) = \nicefrac{1}{2} |\nabla u|^2$ and $g(u) = \nicefrac{1}{2}\, u^2$, and $X=H^1_0(\Omega)$ (i.e., $p=q=2$).
In this setting, the (global) energy functional from~\eqref{eq:varbvp}
may be written in the form
$$
\E(u) = \frac{1}{2} a_\Omega (u,u) -\ell_\Omega (u).
$$
Moreover, we may define the associated energy norm:
$\norm{u}{\E}^2 := a_\Omega (u,u)$. Given the energy norm, exploiting the symmetry of the bilinear form $a_\Omega (\cdot,\cdot)$,
we immediately deduce the following relationship between the error in the computed 
energy functional $\E$, and the error measured in the
terms of the energy norm $\norm{\cdot}{\E}$, namely:
\[
\E(\u) - \E(\uhp) 
%= - \frac{1}{2} a_\Omega (\u-\uhp,\u-\uhp) 
= - \frac{1}{2} \norm{\u-\uhp}{\E}^2.
\]
Thereby, on a global level, reduction of the error in the energy functional
$\E$ naturally leads to a reduction in the energy norm of the error. 

In order to repeat this argument on a subset $\cD\subset\Omega$, we now suppose that
the boundary datum $g$ is given and seek $\u\in H^1(\cD)$ such that 
$\u|_{\partial\cD} = g$ and
\begin{equation}\label{eq:weak3}
a_{\cD}(\u,v)=\ell_{\cD}(v) \qquad\forall v\in H^1_0(\cD).
\end{equation}
Here, $a_{\cD}(\cdot, \cdot)$ and $\ell_{\cD}(\cdot)$ are defined in an analogous
manner to $a_{\Omega}(\cdot, \cdot)$ and $\ell_{\Omega}(\cdot)$, respectively,
with the domain of integration restricted to $\cD$.
In this case, writing $\T_{\cD}$ and $\p_{\cD}$ to denote the finite element sub-mesh and
polynomial degree distribution over $\cD$, respectively, the finite element approximation is
given by: Find $\uhp \in \VD$ such that 
$\uhp|_{\partial\cD} = \Pi g$ and
\[
a_{\cD}(\uhp,v)=\ell_{\cD}(v) \qquad\forall v\in \VDo,
\]
where $\Pi g$ denotes a piecewise polynomial approximation in $H^{\nicefrac12}(\partial\cD)$ of
the Dirichlet datum $g$. Thereby, writing $\E_\cD$ to denote the restriction of the
energy functional $\E$ over $\cD$, i.e.,
$$
\E_\cD(u) := \frac{1}{2} a_\cD (u,u) -\ell_\cD (u),
$$
we deduce the following identity:
\[
\E_\cD(\u) - \E_\cD(\uhp) 
= - \frac{1}{2} a_\cD(\u-\uhp,\u-\uhp) + a_\cD(\u,\u-\uhp)-\ell_\cD(\u-\uhp).
\]
Employing integration by parts
we deduce that
\begin{align*}
a_\cD(\u,\u-\uhp)-\ell_\cD(\u-\uhp) 
&= \int_{\partial\cD}\frac{\partial \u}{\partial {\bf n}_\cD} (\u-\uhp) \ds \\
&= \int_{\partial\cD} \mu^{\prime}(\nabla \u)\cdot {\bf n}_\cD (\u-\uhp) \ds,
\end{align*}
where ${\bf n}_\cD$ denotes the unit outward normal vector on the boundary 
$\partial\cD$ of the domain $\cD$. Thereby,
\begin{equation}
\E_\cD(\u) - \E_\cD(\uhp) 
= - \frac{1}{2} a_\cD(\u-\uhp,\u-\uhp)
 +\int_{\partial\cD} \mu^{\prime}(\nabla \u)\cdot {\bf n}_\cD (\u-\uhp) \ds.
\label{eq:error_formula}
\end{equation}
Stimulated by \eqref{eq:error_formula}, we define the {\em local} energy functional
$\tilde{\E}_\cD(\cdot)$ by
\begin{equation}
\tilde{\E}_\cD(v) := \E_\cD(v) 
- \int_{\partial\cD} \mu^{\prime}(\nabla \u)\cdot {\bf n}_\cD v \ds.
\label{eq:modified_E}
\end{equation}
With this definition, we immediately deduce the following relationship between 
the error in the local energy functional and the error measured in terms of the
local energy norm, namely,
\[
\tilde{\E}_\cD(\u) - \tilde{\E}_\cD(\uhp) 
= - \frac{1}{2} \norm{\u-\uhp}{\E,\cD}^2,
\]
where, for~$w\in H^1(\cD)$, we let $\norm{w}{\E,\cD}^2 := a_\cD(w,w)$.
Moreover, if we consider the evaluation of the above local energy functional
on each element $\kappa$, $\kappa\in\T$, then
we note the following consistency condition holds
$$
\E(v) \equiv \sum_{\kappa\in\T} \tilde{\E}_\kappa (v).
$$
Let us now write $\uhp \in \VD$ and $\uhpc \in \VDc$ to denote two finite element
approximations to \eqref{eq:weak3} based on employing the computational meshes
$\T_\cD$ and $\T_\cD^\prime$, respectively, with polynomial degree vectors
$\p_\cD$ and $\p_\cD^\prime$, respectively. Assuming the finite element space
$\VDc$ represents an enrichment of the original one $\VD$, we deduce that
the expected reduction in the error in the energy functional defined 
over $\cD$ satisfies the equality
\begin{equation}\label{eq:en_reduction}
\begin{split}
\tilde{\E}_\cD(\uhp) - \tilde{\E}_\cD(\uhpc) 
&=(\tilde{\E}_\cD(\u) - \tilde{\E}_\cD(\uhpc)) - (\tilde{\E}_\cD(\u) - \tilde{\E}_\cD(\uhp))  \\
& =  \frac{1}{2} \norm{\u-\uhp}{\E,\cD}^2 - \frac{1}{2} \norm{\u-\uhpc}{\E,\cD}^2.
\end{split}
\end{equation}
Hence, by employing the modified local definition of the energy functional
$\tilde{\E}_\cD$ defined over the subdomain $\cD$, we observe that the 
expected reduction in $\tilde{\E}_\cD$ is directly related to the 
reduction in the energy norm of the error over $\cD$. The equality
\eqref{eq:en_reduction} will form the basis of the proceeding $hp$--adaptive
refinement algorithm.

\subsection{Competitive $hp$--refinement strategy} \label{sec-hprefine}

\begin{figure}[t]
\begin{center}
\includegraphics[scale=0.25]{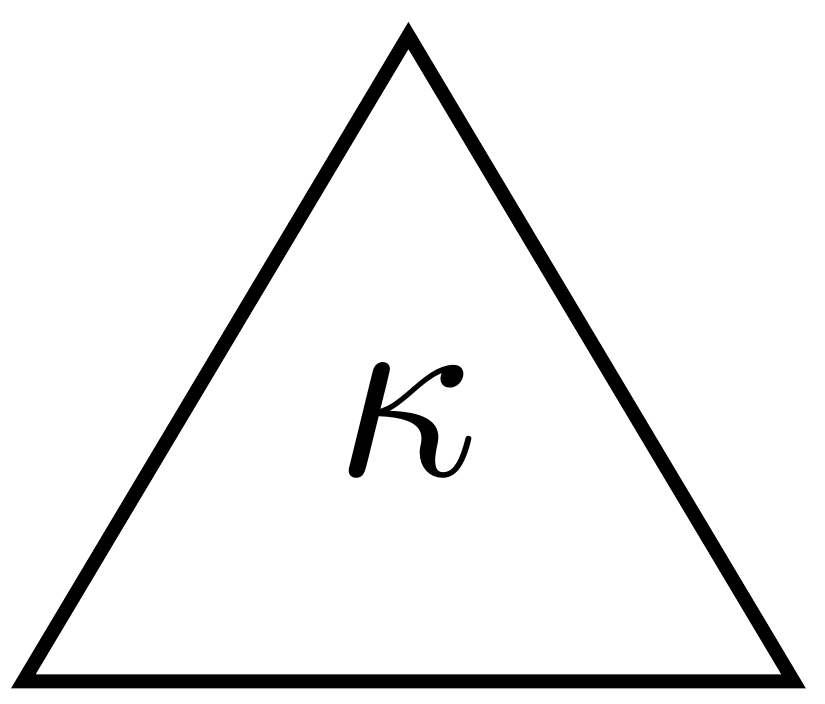} \\
(a) \\
~~\\
\begin{tabular}{cc}
\includegraphics[scale=0.25]{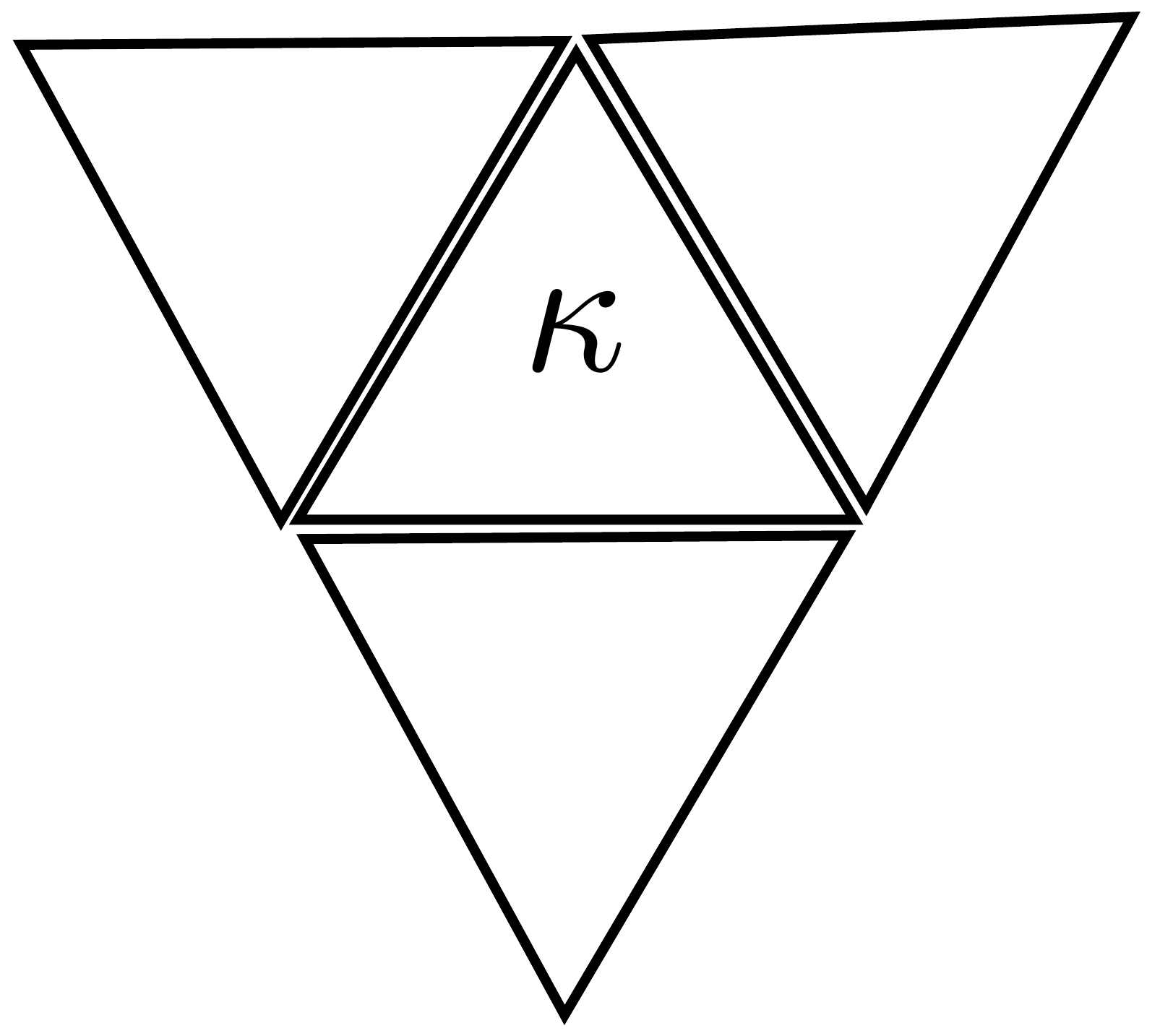} &
\includegraphics[scale=0.25]{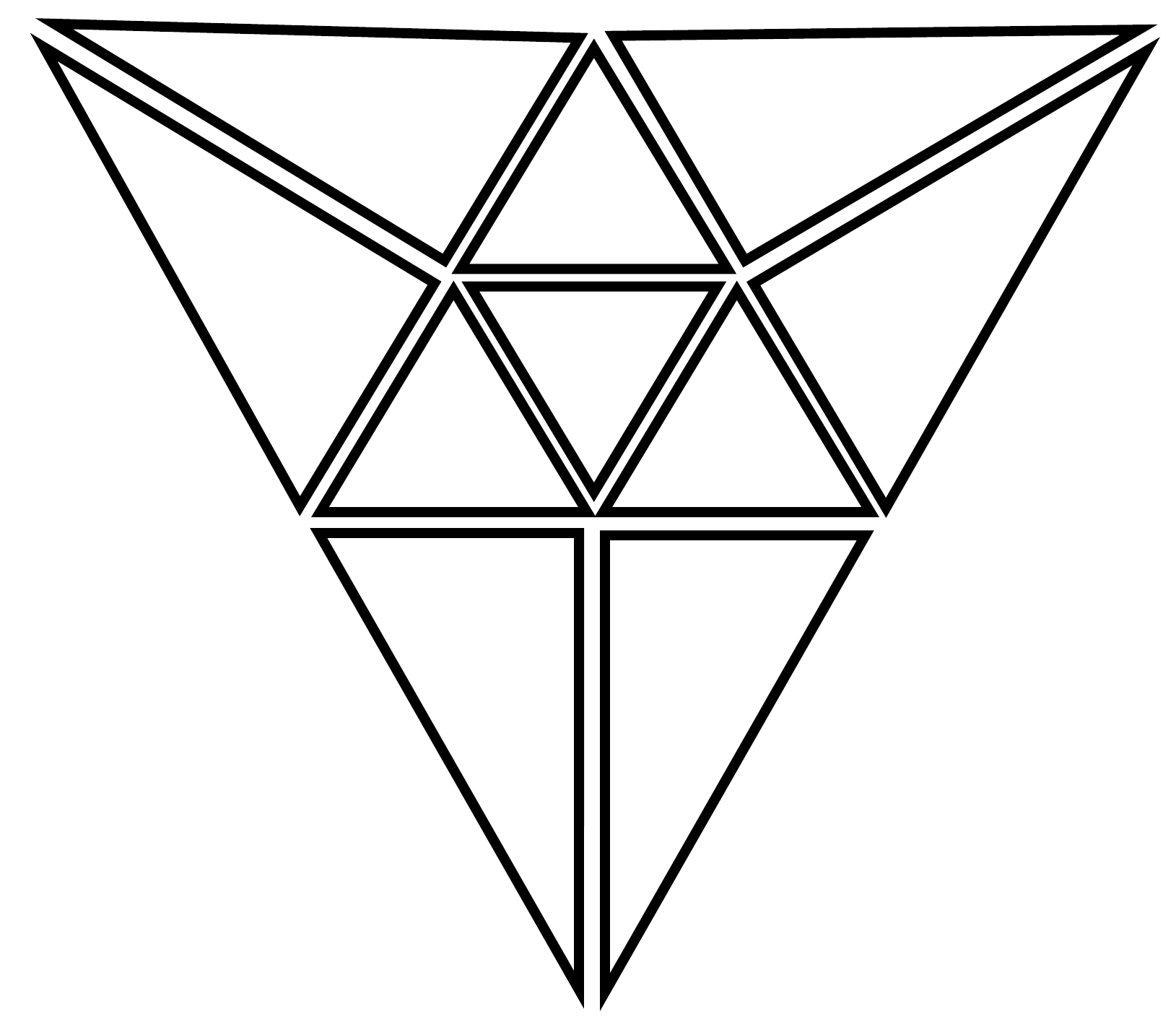} \\
(b) & (c)
\end{tabular}
\end{center}
\caption{Local element patches in two--dimensions, when triangular elements are
employed. (a) Original element $\kappa$ (assumed to be an interior
element); (b) Mesh patch $\T_{\kappa}^{\mathcal N}$, which consists of the
element $\kappa$ and its neighbours; (c) Mesh patch $\refmesh$ which is constructed
based on isotropically refining $\kappa$ (red refinement) and on a green refinement of
its neighbours.}
\label{fig:patches}
\end{figure}

In this section, we develop an $hp$--adaptivity algorithm based on employing
a competitive refinement strategy on each element $\kappa$ in the computational
mesh $\T$. The essential idea is to compute the maximal predicted
energy reduction $\tilde{\E}_\kappa(\uhp) - \tilde{\E}_\kappa(\uloc)$
on each element $\kappa\in\T$, where $\uhp$ is the (global) finite element
element solution defined by \eqref{eq:hpFEM}, and $\uloc$ is the (local)
finite element approximation to the analytical solution $\u$ evaluated on a local patch of
elements neighbouring $\kappa$, subject to a given $p$--/$hp$--refinement.
Employing the forthcoming notation $\uloc$ will either represent
$\up \in \pspace$, cf. \eqref{eq:psolution}, or $\uhpi \in \hpspace$, 
$i=1,\ldots,N_{\kappa,{\rm hp}}$, cf. \eqref{eq:hpsolution}, corresponding
to either a local $p$-- or $hp$--refinement of element $\kappa$, respectively.
Elements with the largest maximal predicted decrease in the local energy
functional are then appropriately refined. However, before we proceed,
we first note that the boundary correction term included within the
definition of the local energy functional $\tilde{E}_\kappa(\cdot)$, cf.
\eqref{eq:modified_E} with $\cD$ replaced by $\kappa$, is not computable
since it directly assumes knowledge of the unknown analytical solution $\u$.
With this in mind, we replace $\u$ by an approximate reference solution,
cf. \cite{De07,opac-b1101124}. However, in contrast to these citations, for the purposes of
the current article we simply compute local reference solutions, rather than
global ones. More precisely, given $\kappa\in\T$, we first construct the
local mesh $\T_{\kappa}^{\mathcal N}$ comprising of $\kappa$ and its immediate
face-wise neighbours, cf. Fig.~\ref{fig:patches}(b). Given $\T_{\kappa}^{\mathcal N}$,
we then uniformly (red) refine element $\kappa$ into~$n_k$ sub-elements; the introduction of any hanging
nodes may then be removed by introducing additional (green) refinements, or
alternatively, by simply uniformly refining all elements in the sub-mesh 
$\T_{\kappa}^{\mathcal N}$. For the purposes of the article, in two--dimensions,
we exploit the former strategy, purely on the basis of reducing the number
of degrees of freedom in the underlying local finite element space.
Denoting the resulting finite element mesh by $\refmesh$, cf. Fig.~\ref{fig:patches}(c),
we construct the finite element space 
$\refspace$, where 
$\p_{\tt ref}|_{\kappa^\prime} = p_\kappa+1$ for all 
$\kappa^\prime \in \refmesh$. Writing 
$\overline{\mathcal D}(\kappa) = \bigcup_{\kappa^\prime \in \refmesh} \overline{\kappa}^\prime$,
the elementwise reference solution may be computed as follows:
Find $\uref \in \refspace$ 
such that 
$\uref|_{\partial {\mathcal D}(\kappa)} = \uhp|_{\partial {\mathcal D}(\kappa)}$
and
\begin{equation}
a_{{\mathcal D}(\kappa)}(\uref,v)=\ell_{{\mathcal D}(\kappa)}(v) 
\qquad\forall v\in \refspaceo.
\label{eq:ref_solution}
\end{equation}
On the basis of the computed reference solution, we define the approximate local
energy functional on $\kappa$, $\kappa\in \T$, as follows:
\begin{equation}
\tilde{\E}^\prime_\kappa(v) := \E_\kappa(v) 
- \int_{\partial\kappa} \avg{\mu^{\prime}(\nabla \uref)} 
\cdot {\bf n}_\kappa v \ds,
\label{eq:modified_E_uref}
\end{equation}
where ${\bf n}_\kappa$ denotes the unit outward normal vector to the boundary
$\partial\kappa$ of $\kappa$, and $\avg{\cdot}$ denotes the average operator. 
More precisely, given two neighbouring elements $\kappa^+$ and $\kappa^-$, let $x$ 
be an arbitrary point on the interior face given
by $F = \partial \kappa^+ \cap \partial \kappa^-$. 
Given a vector-valued function $\uu{q}$ which is smooth inside
each element~$\kappa^\pm$, we write $\uu{q}^\pm$ to denote the traces of
$\uu{q}$ on $F$ taken from within the interior of $\kappa^\pm$,
respectively. Then, the average of $\uu{q}$ at 
$x\in F$ is given by
$
\avg{\uu{q}} =\frac{1}{2}(\uu{q}^++\uu{q}^-).
$ 
On a boundary face $F\subset \partial\Omega$, we set 
$
\avg{\uu{q}}=\uu{q}^+.
$ 

With the definition of $\tilde{\E}^\prime_\kappa(\cdot)$ given in 
\eqref{eq:modified_E_uref}, we now outline the proposed competitive refinement 
strategy on element $\kappa$, $\kappa \in \T$. Firstly, we compute the predicted
energy functional reduction when $p$--refinement is employed, i.e.,
\begin{equation}
\Delta \tilde{\E}^\prime_{\kappa,{\rm p}} := 
\tilde{\E}^\prime_\kappa(\uhp) - \tilde{\E}^\prime_\kappa(\up),
\label{eq:p_energy_red}
\end{equation}
where $\up$ is the solution of the local finite element problem:
Find $\up \in \pspace$ 
such that 
$\up|_{\partial {\mathcal D}(\kappa)} = \uhp|_{\partial {\mathcal D}(\kappa)}$
and
\begin{equation}
a_{{\mathcal D}(\kappa)}(\up,v)=\ell_{{\mathcal D}(\kappa)}(v) 
\qquad\forall v\in \pspaceo;
\label{eq:psolution}
\end{equation}
here, $\p_{\tt p}|_{\kappa^\prime} = p_\kappa+1$ for all 
$\kappa^\prime \in \T_{\kappa}^{\mathcal N}$.

\begin{figure}[t]
\begin{center}
\begin{tabular}{cc}
\includegraphics[scale=0.35]{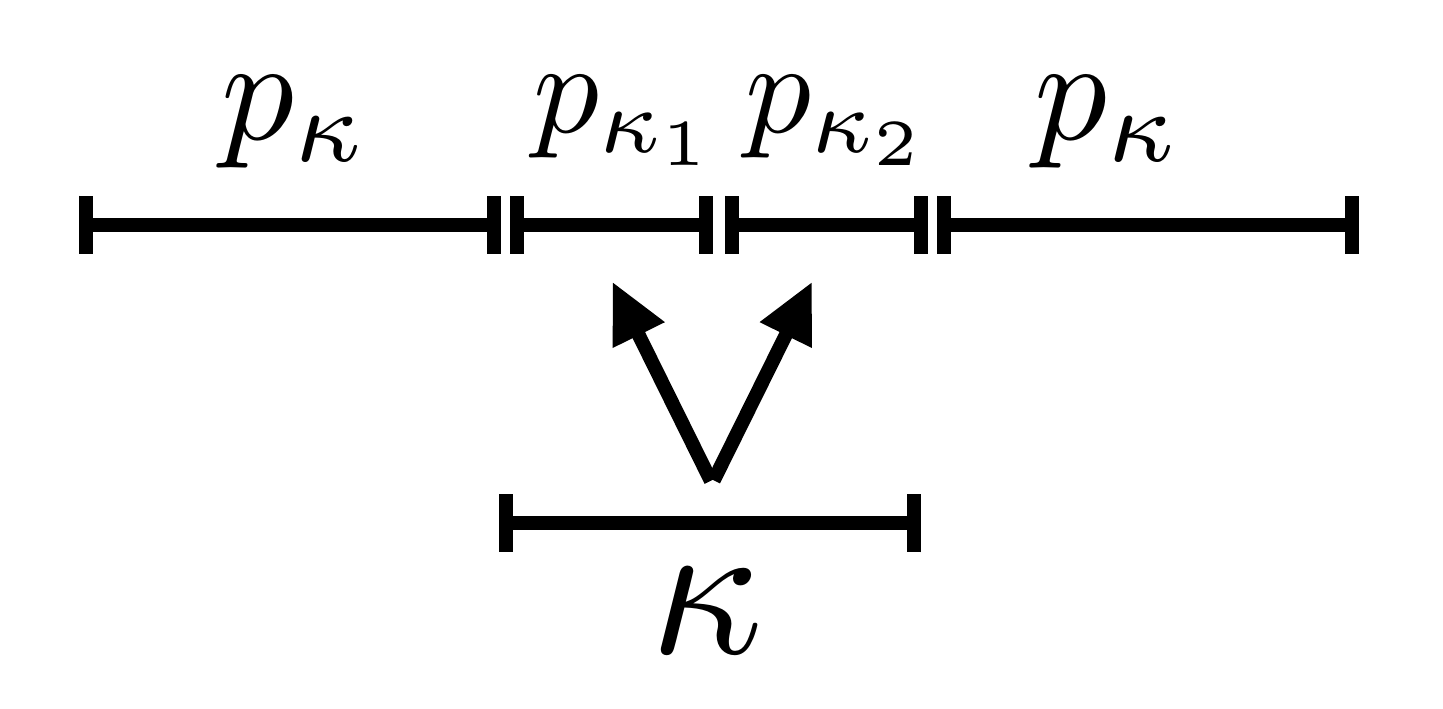} &
\includegraphics[scale=0.25]{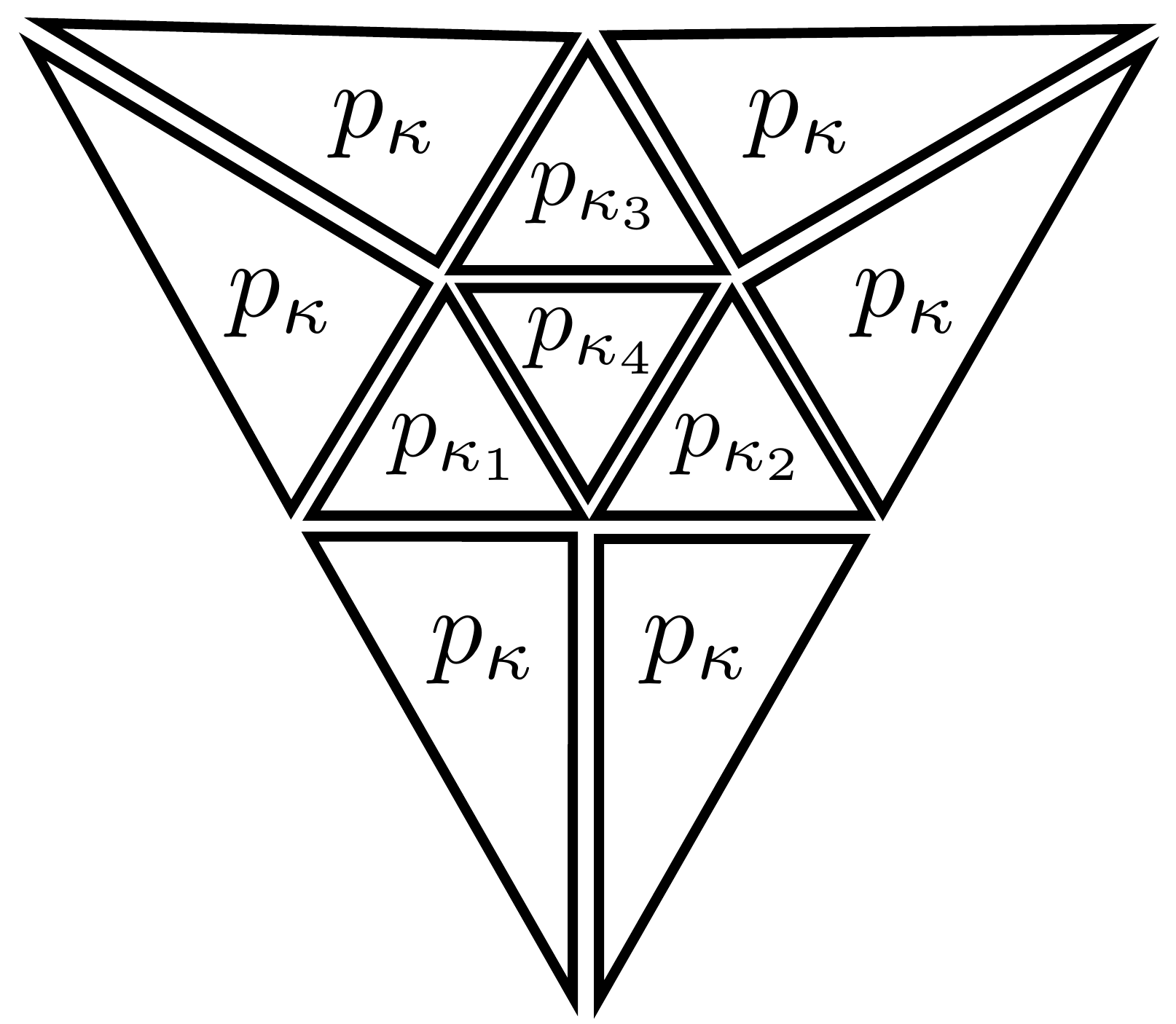} \\
(a) & (b) \\
\end{tabular}
\end{center}
\caption{Polynomial degree distribution employed for the competitive
$hp$--refinements: (a) One--dimension; (b) Two--dimensional triangular element.}
\label{fig:hp_distribution}
\end{figure}

Secondly, we also consider a sequence of competitive $hp$--refinements, such that the
number of degrees of freedom associated with the finite element space
defined over $\kappa$ is identical to the case when pure $p$--refinement has been
employed. Here, for each element $\kappa\in\T$, we again exploit the same local mesh 
$\refmesh$ employed for the computation of the 
local reference solution $\uref$. Then for the elements which result from the
isotropic refinement of $\kappa$, we employ local polynomial degrees
$p_{\kappa_i}$, $i=1,\ldots,n_\kappa$; for the remaining elements stemming from the refinement
of the neighbours of $\kappa$, we simply set the local polynomial degree equal to
$p_\kappa$, cf. Fig.~\ref{fig:hp_distribution}.
For example, in one--dimension, 
following \cite{De07,opac-b1101124}, given an element $\kappa$ with polynomial 
degree $p_\kappa$, an enrichment of $p_\kappa \rightarrow p_\kappa+1$ gives rise
to $p_\kappa+2$ degrees of freedom associated with $\kappa$. On the other hand, we
can now consider the case when $\kappa$ is uniformly subdivided into two sub-elements
$\kappa_1$ and $\kappa_2$, i.e., $n_\kappa=2$, with associated polynomial degrees $p_{\kappa_1}$ and
$p_{\kappa_2}$, respectively. To ensure that the number of degrees of freedom
in the underlying $hp$--refined finite element space defined over $\kappa_1$
and $\kappa_2$ is identical to the case when pure $p$--enrichment is 
undertaken, we require that
$$
p_{\kappa_1}+p_{\kappa_2} = p_\kappa+1.
$$
Hence, there are $N_{\kappa,{\rm hp}}=p_\kappa$, $hp$--competitive refinements and one $p$--refinement
in one--dimension.

\begin{figure}[t!]
\begin{center}
\includegraphics[scale=0.4]{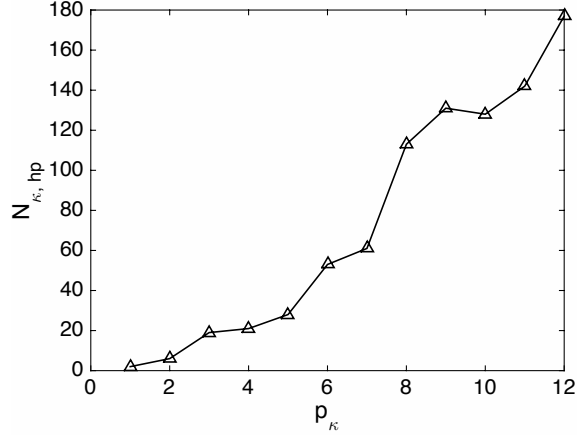}
\end{center}
\caption{Number of competitive $hp$--refinements, $N_{\kappa,{\rm hp}}$, 
versus the local polynomial degree
$p_\kappa$ when a triangular element $\kappa$ is isotropically refined.}
\label{fig:no_permutations}
\end{figure}

In higher--dimensions, the construction of the competitive $hp$--refinements is
undertaken in an analogous manner. For simplicity, we focus on the 
two--dimensional case when triangular elements are employed. 
Then for the elements which result from the
isotropic refinement of $\kappa$, we employ local polynomial degrees
$p_{\kappa_i}$, $i=1,\ldots,n_\kappa=4$; as before, the local polynomial degree of
the remaining elements stemming from the refinement
of the neighbours of $\kappa$ is set equal to
$p_\kappa$. Let us signify the set of all such polynomial degree distributions on~$\refmesh$ by~$\P_{\kappa,p_\kappa}$. Given that the full space of polynomials has been employed for the $p$--refinement, the number of degrees of freedom associated with $\kappa$ is $\nicefrac{1}{2}(p_\kappa+2)(p_\kappa+3)$. Then, for an arbitrary polynomial degree distribution $\{p_{\kappa_i}\}_{i=1}^4$ for the sub-elements $\{\kappa_i\}_{i=1}^4$ of $\kappa$, the number of degrees of freedom associated with $\kappa$ is
$$
6+\sum_{i=1}^3 \left[ \min(p_{\kappa_i},p_{\kappa_4})-1
 +2(p_{\kappa_i}-1) \right]+\frac{1}{2}\sum_{i=1}^4(p_{\kappa_i}-1)(p_{\kappa_i}-2),
$$
where we have assumed that $\kappa_4$ is the sub-element located at the interior of $\kappa$,
cf. Fig.~\ref{fig:hp_distribution}(b). Thereby, we select the set of $hp$--refinements which
satisfy the condition
$$
6+\sum_{i=1}^3 \left[ \min(p_{\kappa_i},p_{\kappa_4})-1
 +2(p_{\kappa_i}-1) \right]+\frac{1}{2}\sum_{i=1}^4(p_{\kappa_i}-1)(p_{\kappa_i}-2) =
 \frac{1}{2}(p_\kappa+2)(p_\kappa+3).
$$
Analogous expressions can also be determined for different element types, other 
kinds of refinement, e.g., anisotropic refinement, as well as in higher--dimensions.
The precise number of competitive $hp$--refinements, denoted by $N_{\kappa,{\rm hp}}$, 
is not possible to determine in
a simple closed form expression; instead, $N_{\kappa,{\rm hp}}$ 
can be precomputed for any polynomial
order. To this end, in Fig.~\ref{fig:no_permutations} we present the number of combinations of local
polynomial degrees $\{p_{\kappa_i}\}_{i=1}^4$ with respect to $p_\kappa$ in the above 
setting, i.e., for the case of isotropic refinement of a triangular element in
two--dimensions. We notice that the number~$N_{\kappa,\rm hp}$ of possible $p$-configurations is, not
 surprisingly, growing as~$p_\kappa$ increases. In view of this observation we remark that, although the subsequent local discrete
 problems defined on each corresponding (patchwise) $hp$--finite element space, 
 cf. \eqref{eq:hpsolution} below, 
 are extremely inexpensive to compute, and moreover are trivially parallelisable, from a practical 
 point of view, it might be computationally beneficial to limit the number of samples to a certain preset
 maximum~$N_{\max}$. For example, a random selection of $N_{\max}$ samples may be considered, cf.
 Section~\ref{sec-numerics} below; alternatively, a more sophisticated strategy selecting polynomial degree 
 distributions  with limited variations could be employed.

We now write $\hpspace$, 
$i=1,\ldots,N_{\kappa,{\rm hp}}$, 
to denote the finite element space based on employing the local (refined) mesh
$\refmesh$ and some local polynomial degree distribution
$\p_{{\rm hp}_i}\in\P_{\kappa,p_\kappa}$. Thereby, the following competitive $hp$--refinements may be defined:
Find $\uhpi \in \hpspace$ 
such that 
$\uhpi|_{\partial {\mathcal D}(\kappa)} = \uhp|_{\partial {\mathcal D}(\kappa)}$
and
\begin{equation}
a_{{\mathcal D}(\kappa)}(\uhpi,v)=\ell_{{\mathcal D}(\kappa)}(v) 
\qquad\forall v\in \hpspaceo,
\label{eq:hpsolution}
\end{equation}
for $i=1,\ldots,N_{\kappa,{\rm hp}}$. For each local competitive $hp$--refinement,
we compute the estimated local energy reduction
\begin{equation}
\Delta \tilde{\E}^\prime_{\kappa,{\rm hp}_i} := 
\tilde{\E}^\prime_\kappa(\uhp) - \tilde{\E}^\prime_\kappa(\uhpi),
\label{eq:hp_energy_red}
\end{equation}
for $i=1,\ldots,N_{\kappa,{\rm hp}}$. In this way, for each element $\kappa\in\T$, we may compute the maximum local predicted
error reduction 
\begin{equation}
\Delta \tilde{\E}^\prime_{\kappa,\max}
= \max \left\{ \Delta \tilde{\E}^\prime_{\kappa,{\rm p}},
\max_{i=1,\ldots,N_{\kappa,{\rm hp}}} \Delta \tilde{\E}^\prime_{\kappa,{\rm hp}_i} \right\},
\label{eq:max_error_reduction}
\end{equation}
with~$\Delta \tilde{\E}^\prime_{\kappa,{\rm p}}$ from~\eqref{eq:p_energy_red}.
Finally, we refine the set of elements $\kappa\in\T$ which satisfy the
condition
\begin{equation}
\Delta \tilde{\E}^\prime_{\kappa,\max} > \theta \max_{\kappa\in\T} 
\Delta \tilde{\E}^\prime_{\kappa,\max},
\label{eq:element_marking}
\end{equation}
where $0<\theta <1$ is a given parameter, cf. \cite{De07,opac-b1101124}. 
On the basis of \cite{De07,opac-b1101124},
throughout this article, we set $\theta = \nicefrac13$. The above competitive $hp$--refinement
strategy is summarised in Algorithm~\ref{alg:hpFEMbasic}.

\begin{algorithm}
\begin{algorithmic}[1]
\caption{Competitive $hp$-adaptive refinement procedure}
\label{alg:hpFEMbasic}
\State{Choose a coarse initial mesh~$\T_0$ of~$\Omega$ and a corresponding low-order starting polynomial degree vector~$\p_0$. Set~$n=0$.}
\State{Solve~\eqref{eq:hpFEM} for~$\uhp\in\Vn$.}
\For {each element~$\kappa\in\T_n$}
\myState{Construct the local reference mesh $\refmesh$.}
\myState{Compute the local finite element reference solution $\uref \in \refspace$ satisfying \eqref{eq:ref_solution}.}
\myState{Compute the local finite element $p$--enriched solution $\up\in\pspace$ satisfying \eqref{eq:psolution}, together with the corresponding predicted energy functional reduction $\Delta \tilde{\E}^\prime_{\kappa, {\rm p}}$, cf. \eqref{eq:p_energy_red}.}
\For {$i=1,\ldots,N_{\kappa,{\rm hp}}$}
\myStateDouble{Compute the local competitive $hp$--refined finite element solutions $\uhpi\in \hpspace$ satisfying \eqref{eq:hpsolution}, together with their respective predicted energy functional reduction $\Delta \tilde{\E}^\prime_{\kappa,{\rm hp}_i}$ defined in \eqref{eq:hp_energy_red}.}
\EndFor
\myState{Compute the maximum local predicted error reduction $\Delta \tilde{\E}^\prime_{\kappa,\max}$, cf. \eqref{eq:max_error_reduction}.}
\EndFor
\State{Determine the set of elements $\mathcal{K}_n$ which are flagged for refinement, based on the criterion \eqref{eq:element_marking}.}
\State{Perform $p$-- or $hp$--refinement on each~$\kappa\in\mathcal{K}_n$ according to which refinement takes the maximum in~\eqref{eq:max_error_reduction}. This results in a refined global finite element space~$\mathcal{V}(\T_{n+1},\p_{n+1})$.}
\State{Set $n\leftarrow n+1$, and goto Line~{\footnotesize 2}.}
\State{After sufficiently many iterations have been performed output the final solution $\uhp\in \Vn$.}
\end{algorithmic}
\end{algorithm}

\section{Numerical Examples} \label{sec-numerics}

In this section we present a series of numerical experiments to demonstrate the practical performance of the proposed
$hp$--adaptive refinement strategy outlined in Algorithm~\ref{alg:hpFEMbasic}. 

\subsection{Example 1: Linear Elliptic Problem}

In this first example, we consider a one--dimensional problem defined over the
domain $\Omega=(0,1)$. Moreover, we set
$\mu(u_x) = \nicefrac{1}{2}\,\varepsilon \, u_x^2$, $\varepsilon >0$, 
$g(u)=\nicefrac{1}{2}\, u^2$,
and $f(x) = 1$; this is equivalent to solving the linear elliptic boundary value problem:
$$
-\varepsilon \u_{xx}+\u=1, \qquad x\in \Omega,
$$
subject to homogeneous Dirichlet boundary conditions. 
We note that the analytical solution is given by
$$
\u(x) = \frac{{\rm e}^{-\nicefrac{1}{\sqrt{\varepsilon}}}-1}{{\rm e}^{\nicefrac{1}{\sqrt{\varepsilon}}}-{\rm e}^{-\nicefrac{1}{\sqrt{\varepsilon}}}}  {\rm e}^{\nicefrac{x}{\sqrt{\varepsilon}}}
         +\frac{1-{\rm e}^{\nicefrac{1}{\sqrt{\varepsilon}}}}{{\rm e}^{\nicefrac{1}{\sqrt{\varepsilon}}}-{\rm e}^{-\nicefrac{1}{\sqrt{\varepsilon}}}}  {\rm e}^{-\nicefrac{x}{\sqrt{\varepsilon}}}+1.
$$
In particular, for $0 < \varepsilon \ll 1$, the analytical solution $\u$ contains boundary layers in the vicinity of $x=0$ and $x=1$,
cf. \cite{W10}; as in \cite{W10}, we set $\varepsilon = 10^{-5}$.

\begin{figure}[t]
\begin{center}
\begin{tabular}{cc}
\includegraphics[scale=0.33]{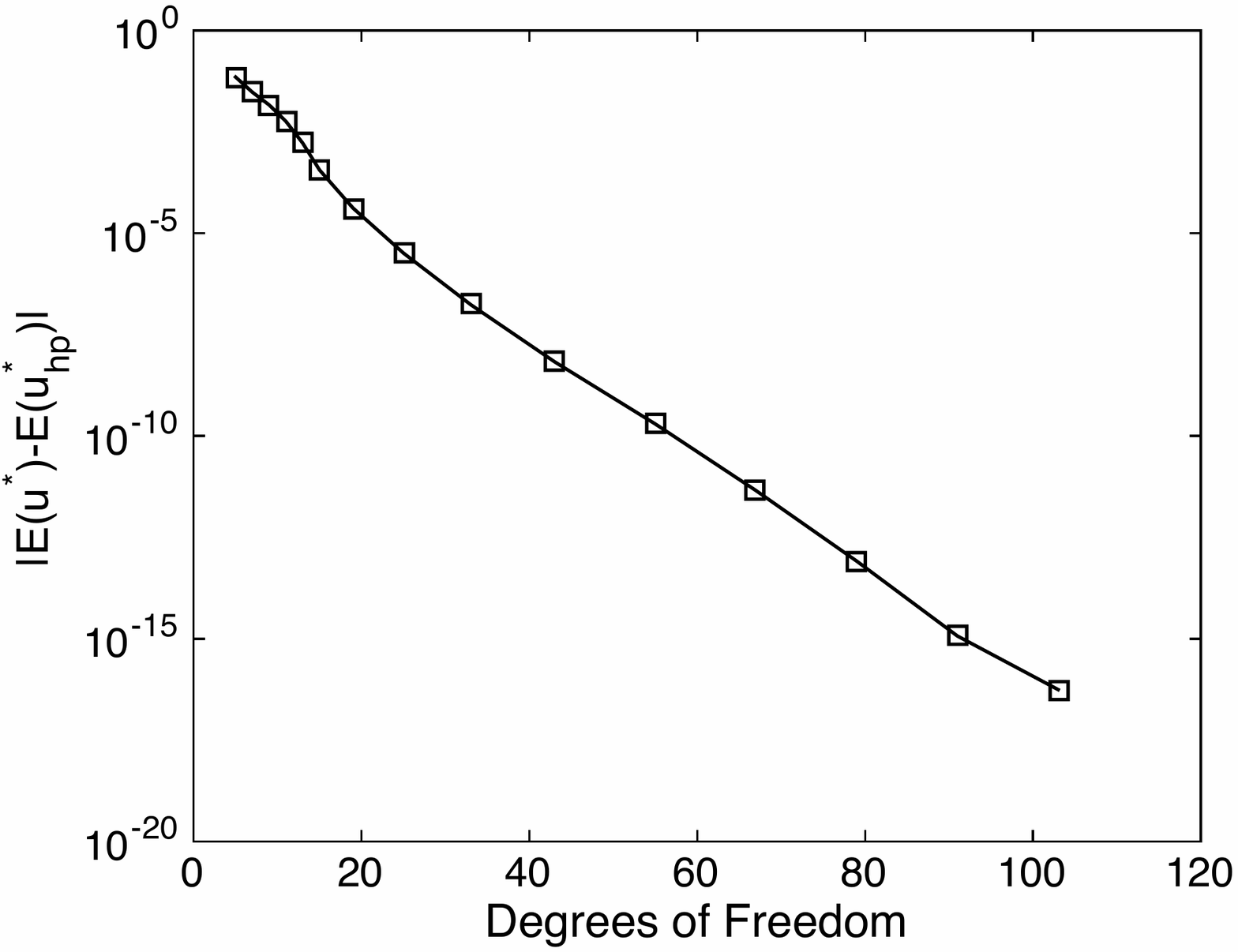} &
\includegraphics[scale=0.33]{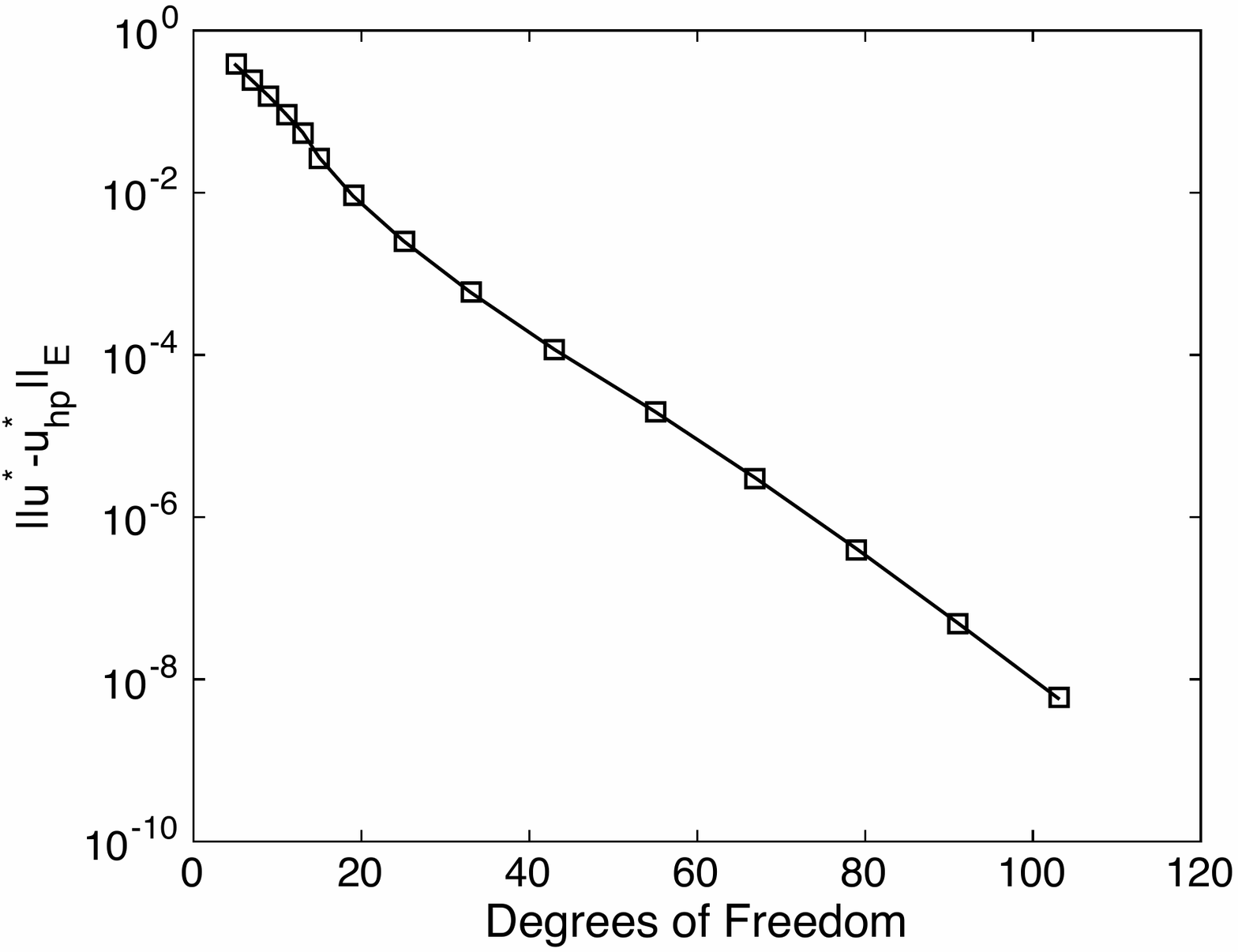} \\
(a) & (b)\\
\includegraphics[scale=0.33]{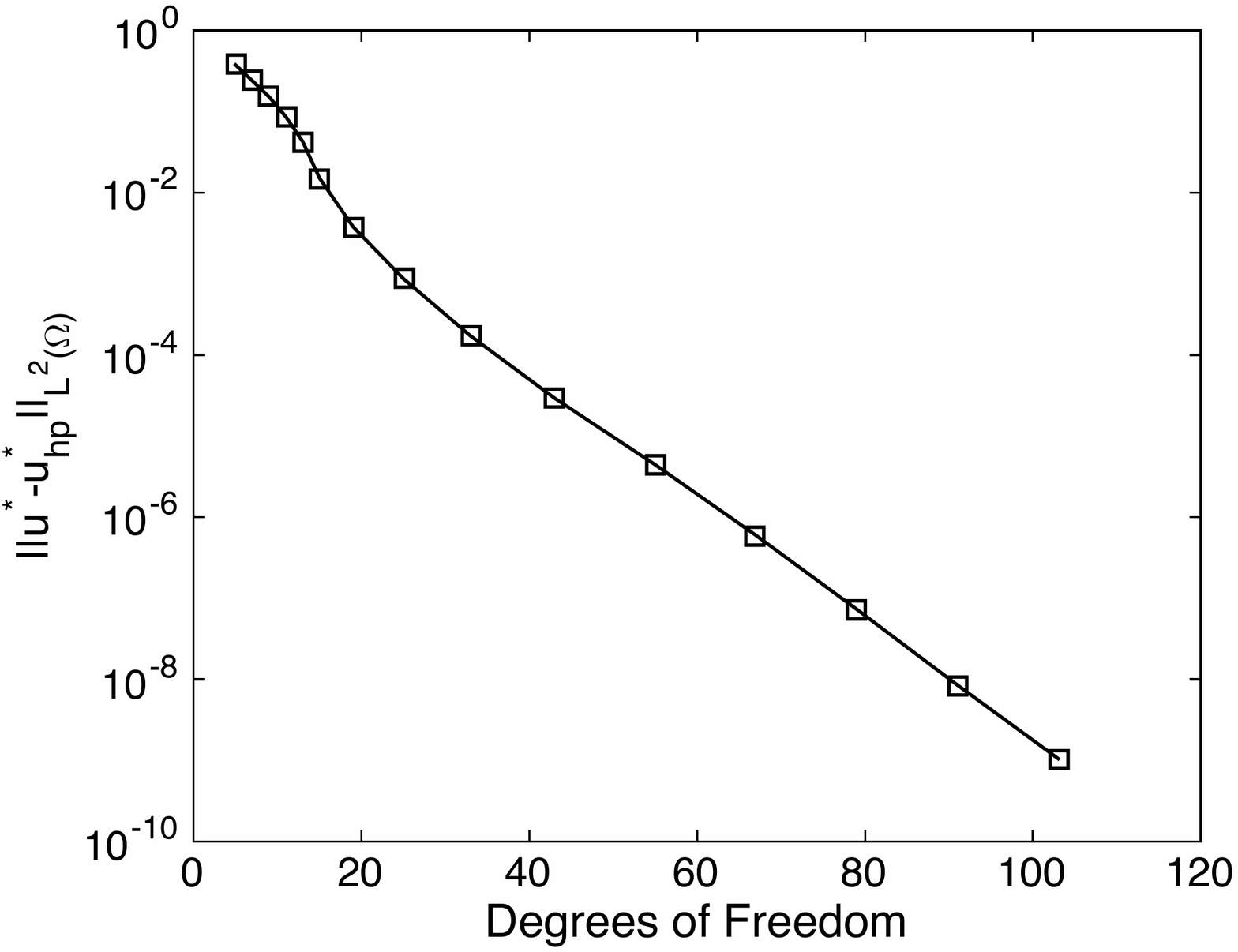} &
\includegraphics[scale=0.33]{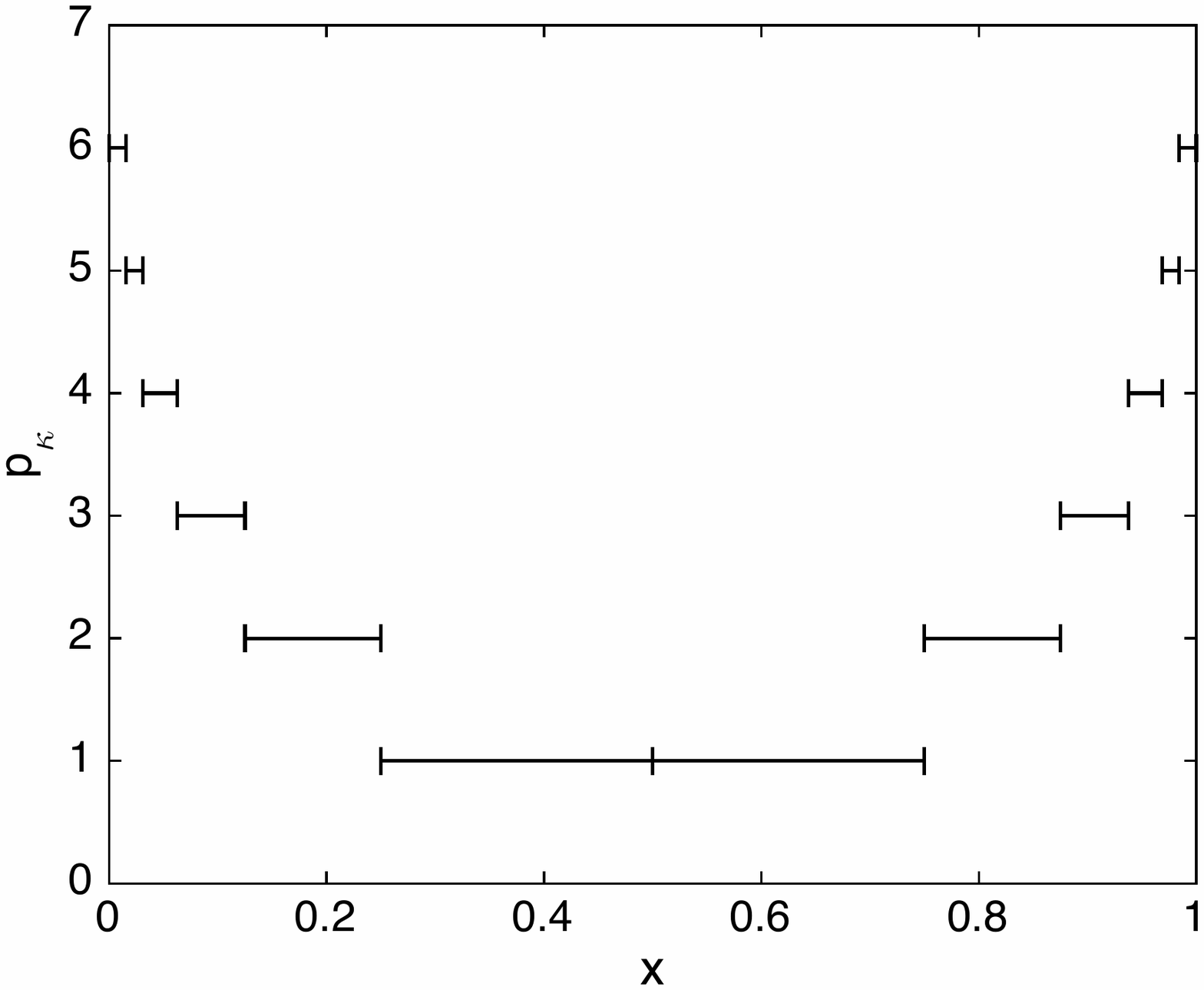} \\
(c) & (d)\\
\end{tabular}
\end{center}
\caption{Example 1. Comparison of the error with respect to the number of degrees of freedom:
(a) $|\E(\u)-\E(\uhp)|$; (b) $\|\u-\uhp \|_\E$; (c) $\|\u-\uhp\|_{L^2(\Omega)}$. 
(d) $hp$--Mesh distribution after 9 adaptive refinements. }
\label{ex1-fig1}
\end{figure}

In Fig.~\ref{ex1-fig1} we illustrate the performance of the proposed $hp$--adaptive algorithm, cf. Algorithm~\ref{alg:hpFEMbasic},
based on a starting mesh consisting of 4 elements, with the initial polynomial degree $\p= [1,1,1,1]$. Here, we have plotted
the error in the underlying energy functional~$\E$, together with the energy norm $\|\cdot\|_{\E}$ and $L^2(\Omega)$ norm
of the error, with respect to the total number of degrees of freedom employed within the finite element space
$\V$, on a linear--log scale; here, 
$$
\|v\|_{\E}^2 = \int_0^1 (\varepsilon v_x^2 + v^2) \dx.
$$
From Fig.~\ref{ex1-fig1}(a), (b), \& (c), we observe, that after an initial transient, the convergence lines for each error
measure become (on average) straight, thereby indicating exponential convergence of the quantities
$|\E(\u)-\E(\uhp)|$, $\|\u-\uhp\|_{\E}$, and $\|\u-\uhp\|_{L^2(\Omega)}$, respectively, as $\V$ is adaptively
enriched. Finally, in Fig.~\ref{ex1-fig1}(d) we show the $hp$--mesh distribution after 9 adaptive refinements. Here,
we observe that the algorithm clearly identifies the location of the boundary layers present in the analytical solution
$\u$; indeed, in these regions, local subdivision of the mesh has first been employed, followed by subsequent $p$--enrichment,
cf. \cite{W10}.

\subsection{Example 2: Strongly monotone quasilinear PDE}

\begin{figure}[t]
\begin{center}
\begin{tabular}{cc}
\includegraphics[scale=0.33]{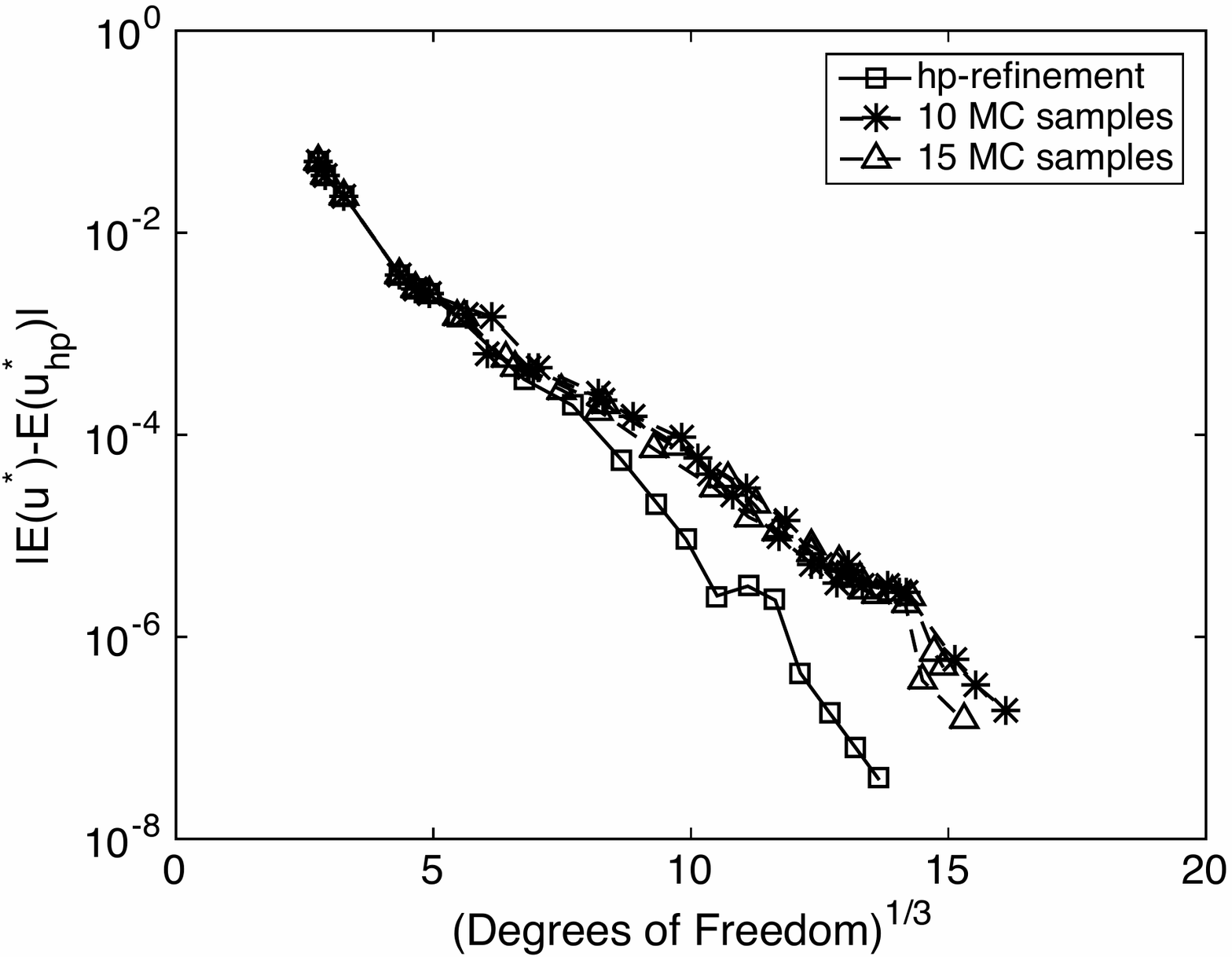} &
\includegraphics[scale=0.33]{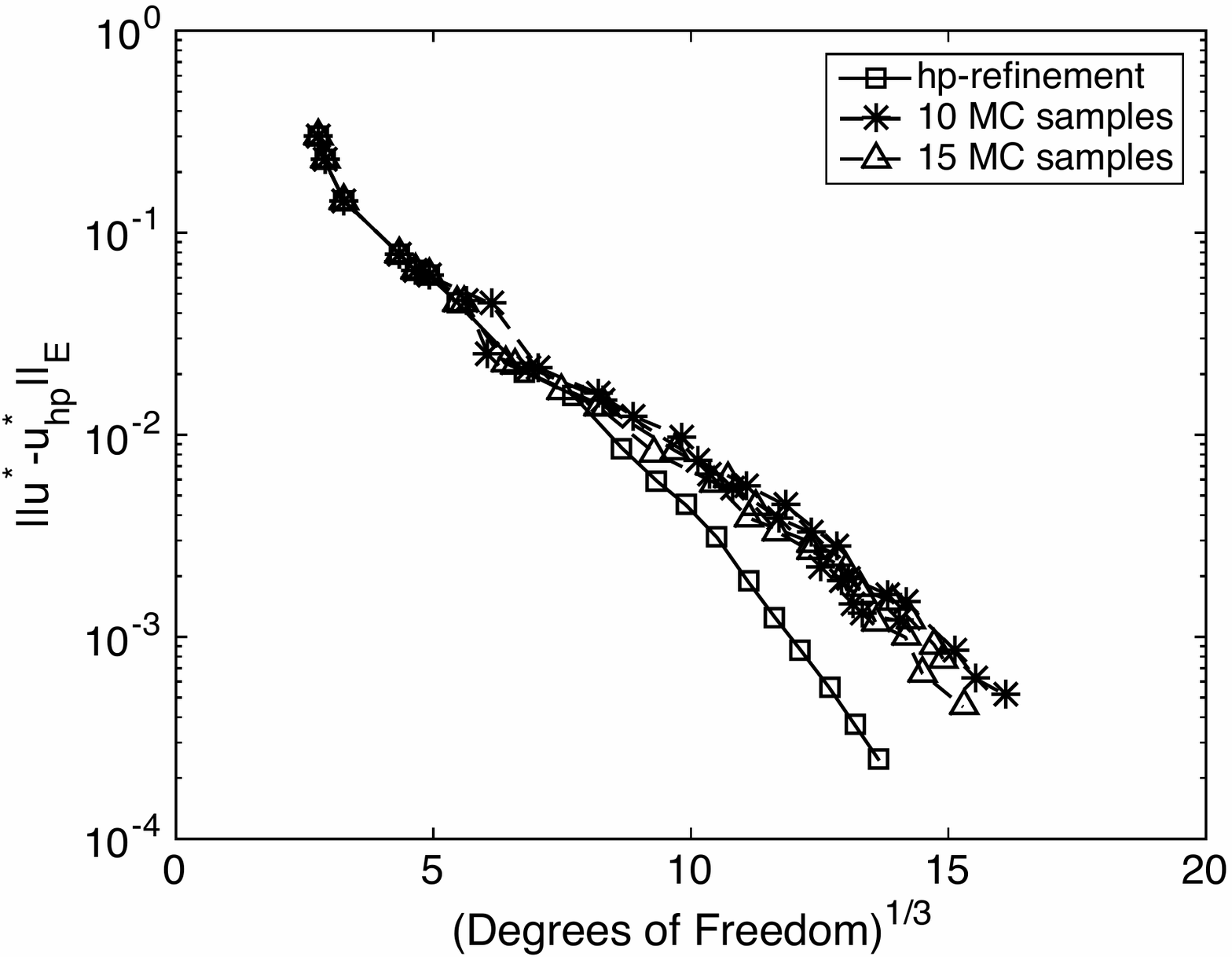} \\
(a) & (b)\\
\multicolumn{2}{c}{\includegraphics[scale=0.33]{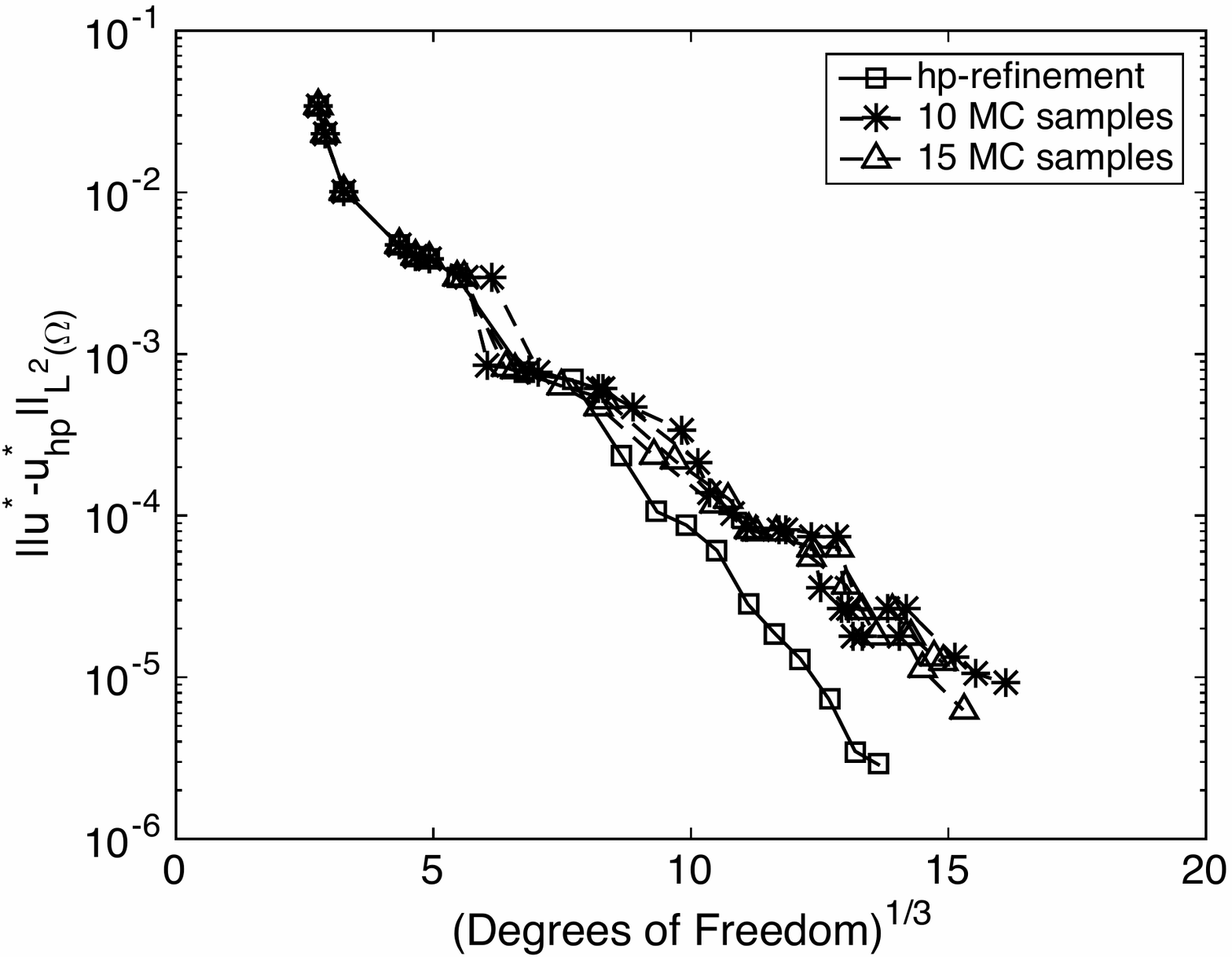}} \\
\multicolumn{2}{c}{(c)} \\
\end{tabular}
\end{center}
\caption{Example 2. Comparison of the error with respect to the third root
of the number of degrees of freedom:
(a) $|\E(\u)-\E(\uhp)|$; (b) $\|\u-\uhp \|_\E$; (c) $\|\u-\uhp\|_{L^2(\Omega)}$.}
\label{ex2-fig1}
\end{figure}

In this second example, we let $\Omega$ be the L-shaped domain
$(-1,1)^2\setminus [0,1)\times (-1,0]$, and set
$$
\mu(\nabla u) = \frac{1}{2} \left(|\nabla u|^2 - {\rm e}^{-|\nabla u|^2} \right).
$$
Thereby, the corresponding Euler--Lagrange equation for the underlying
minimisation problem corresponds to the strongly monotone quasilinear
PDE given by:
\begin{align}
-\nabla \cdot 
\left( \left(1 + {\rm e}^{-|\nabla \u|^2} \right) \nabla \u \right) =& f, 
\quad \mbox{in } \Omega. \label{eq:monotone_pde1}
\end{align}
We select $f$ and appropriate inhomogeneous Dirichlet boundary conditions 
so that the analytical solution to \eqref{eq:monotone_pde1} is given by
$$
u=r^{\nicefrac23}\sin\left( \tfrac{2}{3}\varphi\right),
$$
where $(r,\varphi)$ denote the system of polar coordinates,
cf. \cite{Congreve_et_al_2011,Houston_QL}, for example.

Selecting the energy norm $\|\cdot\|_{\E}$ to be the standard $H^1(\Omega)$ norm, 
in Fig.~\ref{ex2-fig1} we again present the convergence history of the error
in the computed energy functional~$\E$, together with $\|\u-\uhp\|_{\E}$, and $\|\u-\uhp\|_{L^2(\Omega)}$,
as the finite element space is $hp$--adaptively refined. On a linear--log scale 
(where the horizontal axis measures the third root of the total number of the degrees 
of freedom, cf.~\cite{Schwab98}), we again
observe exponential rates of convergence, in the sense that asymptotically the convergence
lines become roughly straight. In addition, in Fig.~\ref{ex2-fig1} we also 
present analogous results in the case when a Monte Carlo (MC) approach is employed to limit
the number~$N_{\max}$ of $hp$--refinement samples considered on each element. More precisely,
we randomly select samples based on employing $N_{\max} = 10$ and $N_{\max} = 15$; in each case
two typical realisations are presented. Here, we observe a slight degradation of
the rate of convergence in each of the above error quantities as our $hp$--refinement
procedure progresses, as we would expect; however, in each case exponential convergence
is retained when this simple selection principle is exploited. As noted in 
Section~\ref{sec-hprefine} more sophisticated selection principles may also be employed.

The final $hp$--mesh distribution is depicted in
Fig.~\ref{ex2-fig2}; here, we see that the computational mesh has been largely refined in the vicinity 
of the re-entrant corner located at the origin. In addition, we see that the polynomial degrees have been 
increased away from the origin, since the underlying analytical solution is smooth in this region.
In particular, we observe that the refinement algorithm has generated an $hp$--mesh distribution which 
is symmetric with respect to the line $x_2=-x_1$.

\begin{figure}[t]
\begin{center}
%\begin{tabular}{cc}
\includegraphics[scale=0.42]{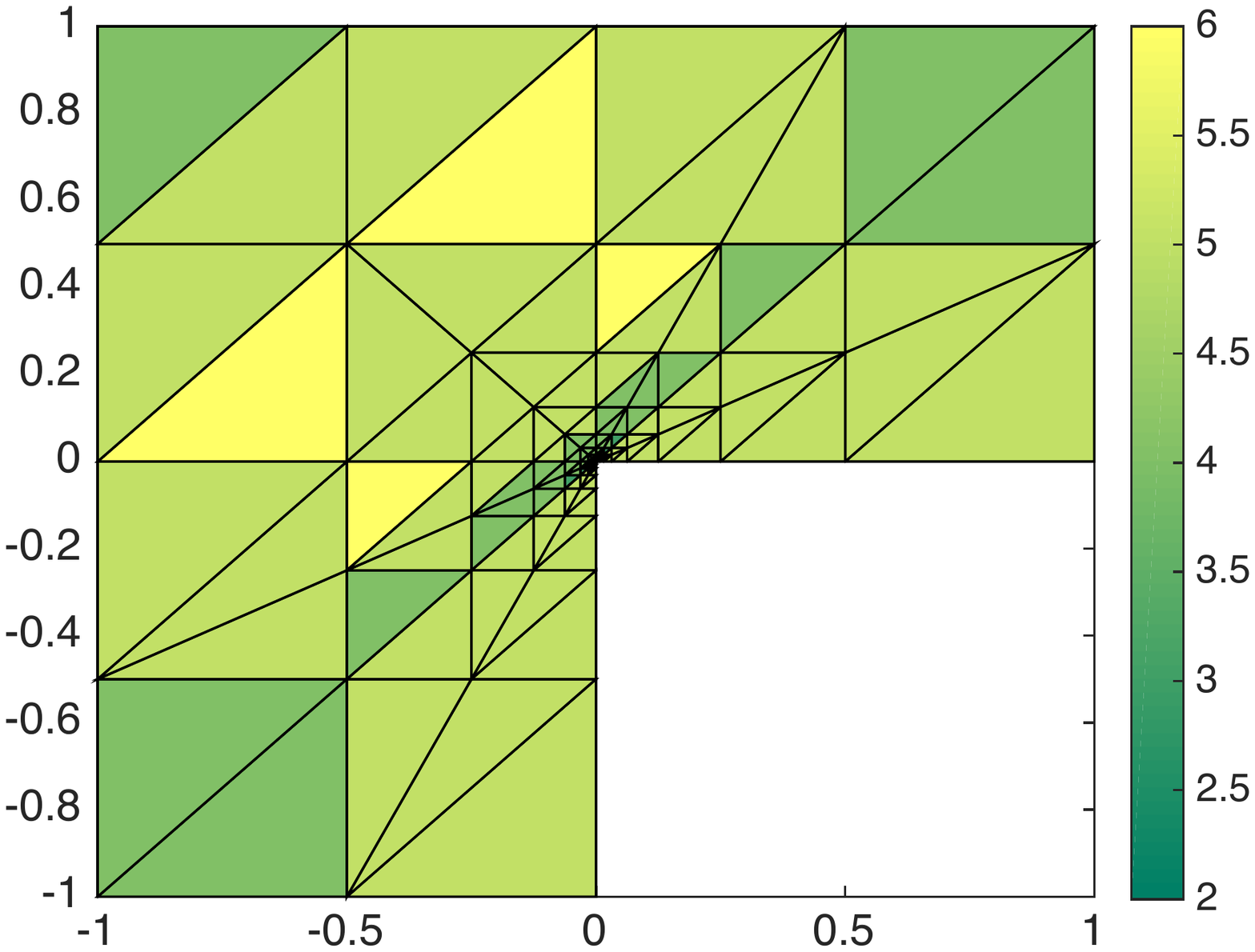} \\
(a) \\
\includegraphics[scale=0.42]{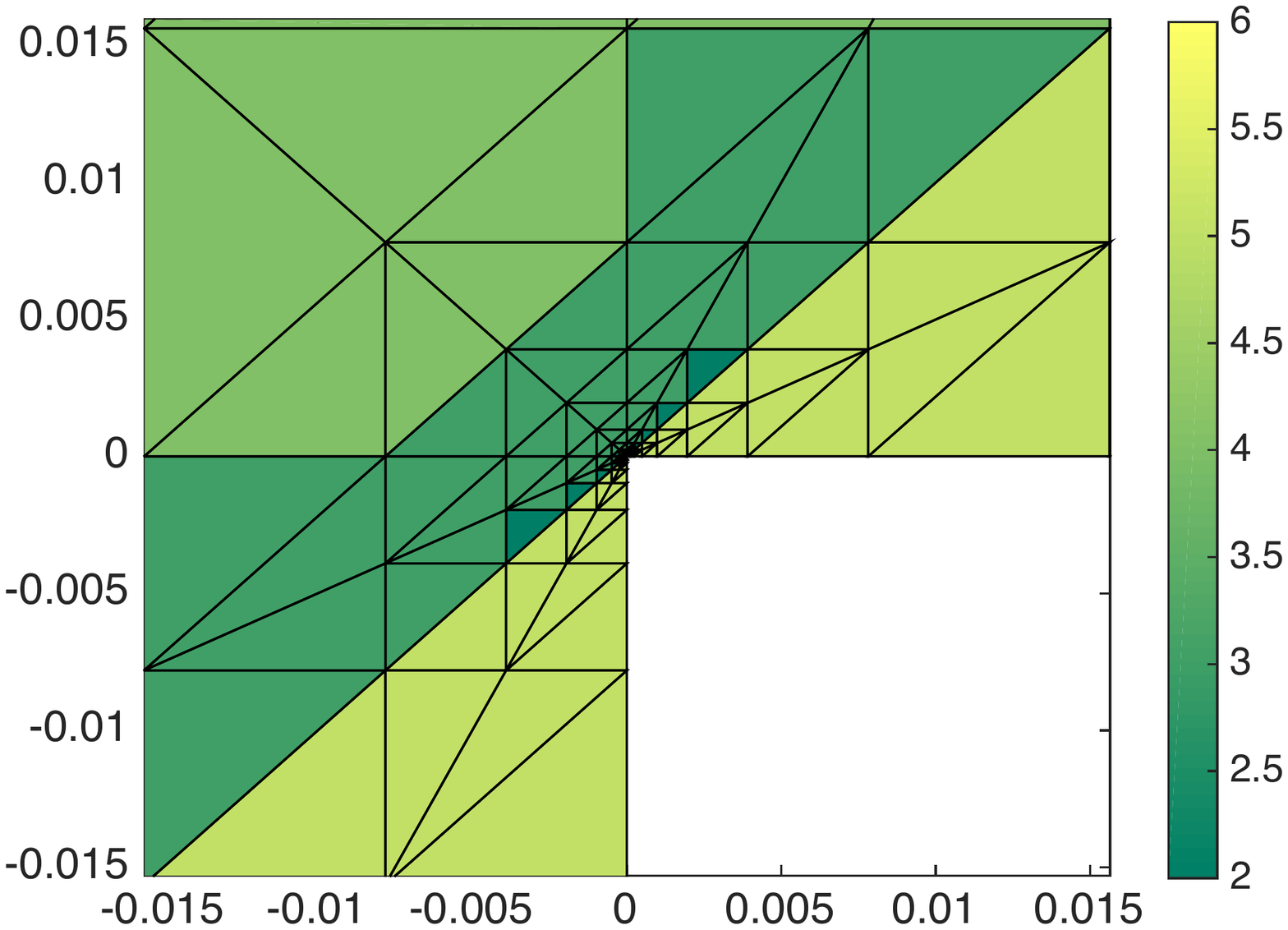} \\
(b) \\
%\end{tabular}
\end{center}
\caption{Example 2. (a) $hp$--Mesh distribution after 18 adaptive refinements;
(b) Zoom of (a).}
\label{ex2-fig2}
\end{figure}

\subsection{Example 3: $\pp$--Laplacian}

\begin{figure}[t]
\begin{center}
\begin{tabular}{cc}
\includegraphics[scale=0.33]{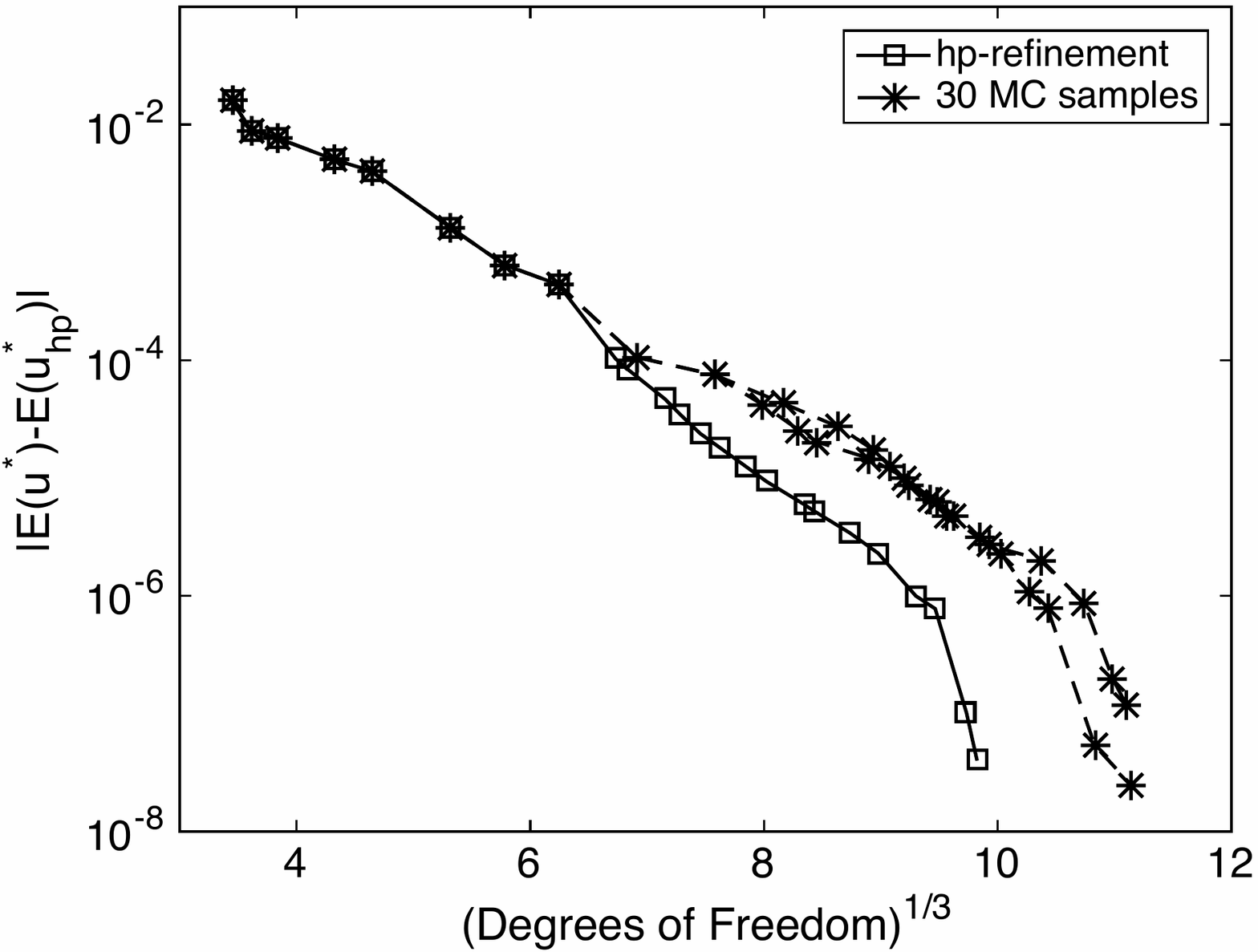} &
\includegraphics[scale=0.33]{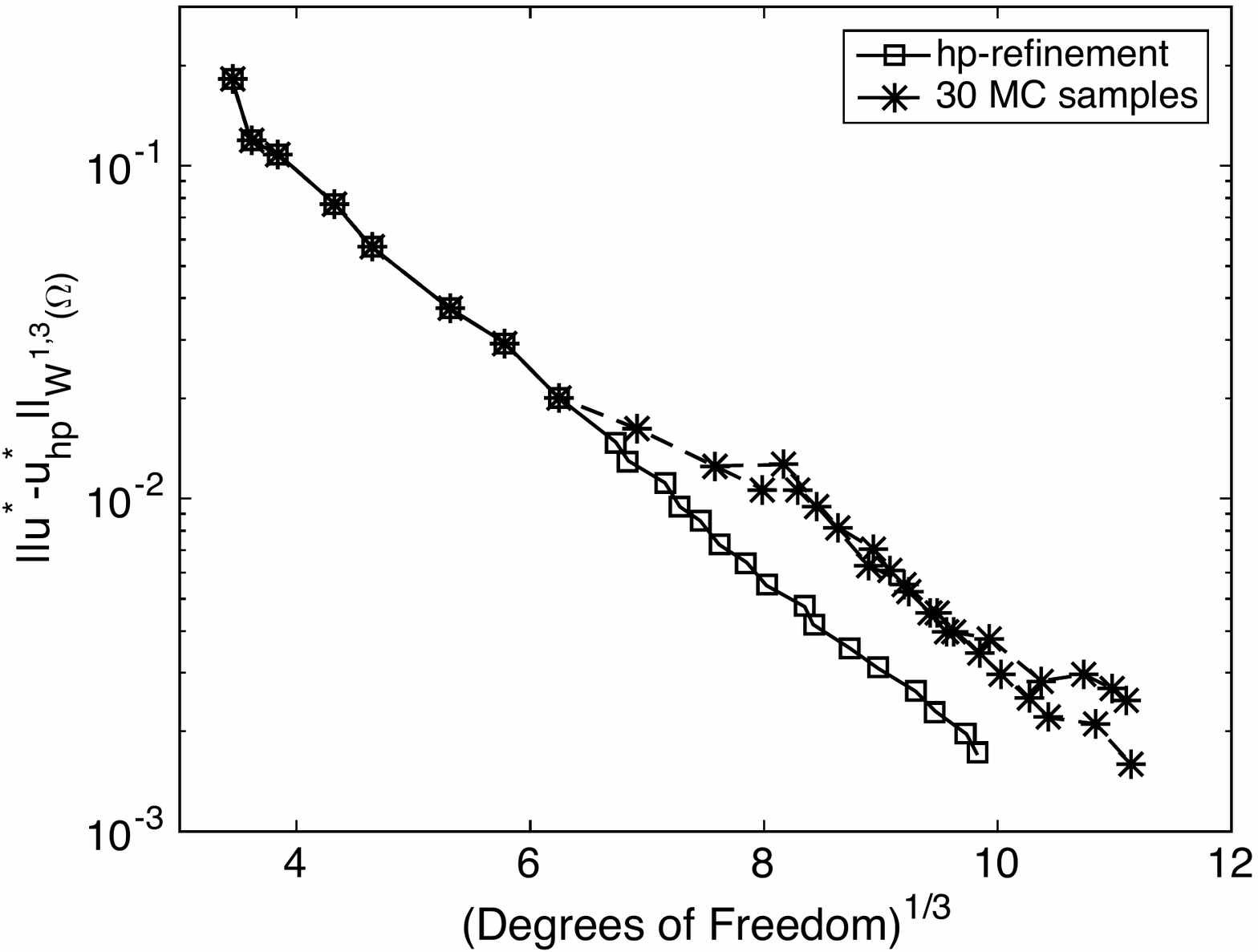} \\
(a) & (b)\\
\multicolumn{2}{c}{\includegraphics[scale=0.33]{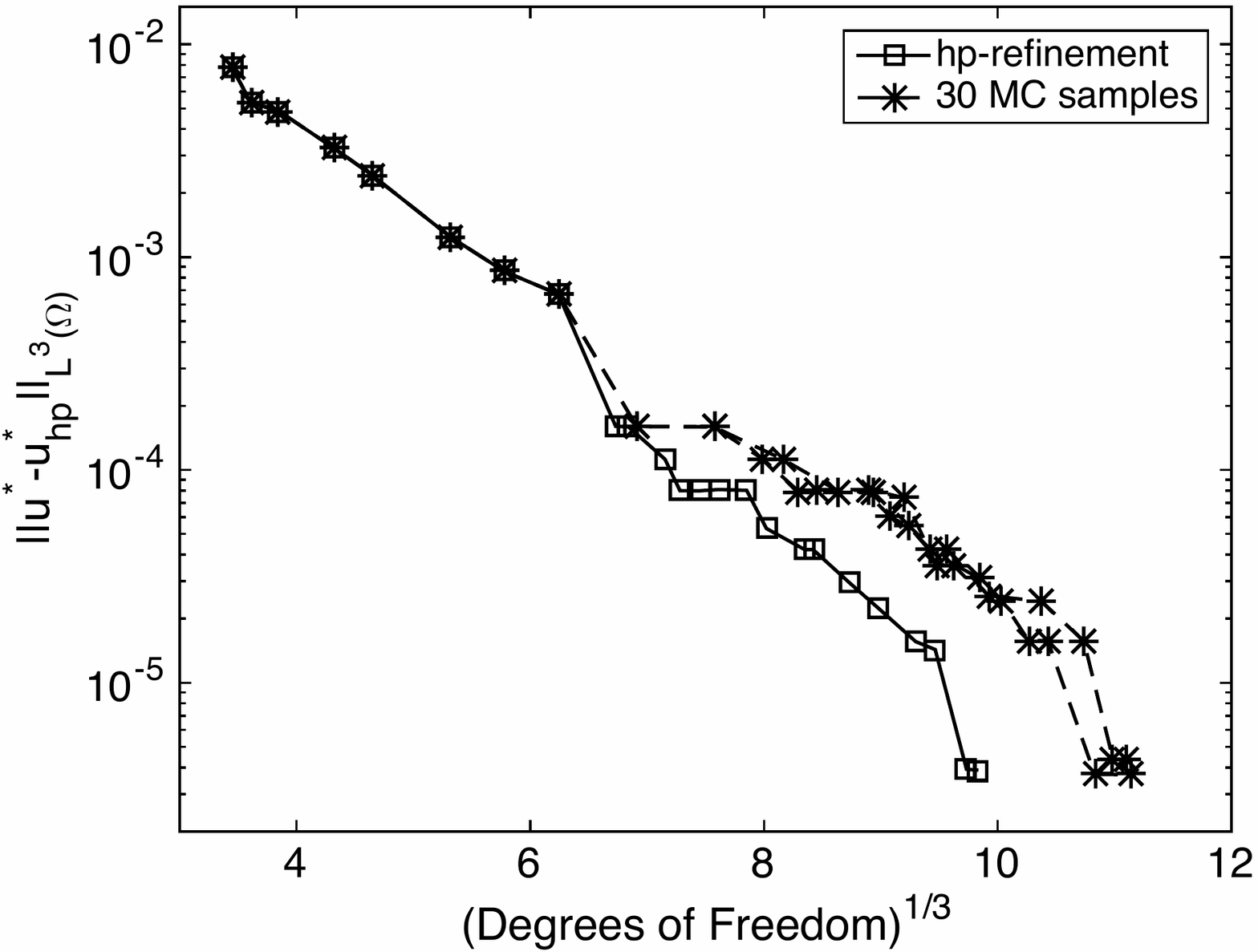}} \\
\multicolumn{2}{c}{(c)} \\\end{tabular}
\end{center}
\caption{Example 3. Comparison of the error with respect to the third root of the
number of degrees of freedom:
(a) $|\E(\u)-\E(\uhp)|$; (b) $|\u-\uhp |_{W^{1,3}(\Omega)}$; (c) $\|\u-\uhp\|_{L^3(\Omega)}$.}
\label{ex3-fig1}
\end{figure}

In this final example, for $\pp>1$, we consider the $\pp$--Laplacian problem
\begin{equation}
-\nabla \cdot (|\nabla \u|^{\pp-2} \nabla \u) = f,  \qquad \mbox{in } \Omega = (0,1)^2, \label{p-laplacian}
\end{equation}
subject to inhomogeneous Dirichlet boundary conditions. We point out that in
this setting, \eqref{p-laplacian} corresponds to the Euler--Lagrange equation
for the energy minimisation problem
$$
\min_{u\in W^{1,\pp}_0(\Omega)} \left\{ \frac{1}{\pp} \int_\Omega |\nabla u|^{\pp} \dx-\int_\Omega fu \dx \right\};
$$
i.e., we have $\mu(\nabla u) = \nicefrac{1}{\pp}  |\nabla u|^{\pp}$ and $g = 0$. We select $f$,
and impose suitable inhomogeneous Dirichlet boundary conditions, so that the analytical solution
of \eqref{p-laplacian} is given by
$$
\u(x) = r^\alpha, \qquad \alpha > 0.
$$
As in \cite{Ainsworth1999}, throughout this section, we set $\pp=3$ and $\alpha=\nicefrac34$,
which implies that $\u \in W^{\beta-\epsilon,3}(\Omega)$, where $\beta = \nicefrac{13}{6}$
and $\epsilon>0$ is arbitrarily small.

In Fig.~\ref{ex3-fig1} we plot $|\E(\u)-\E(\uhp)|$, $ \|\u-\uhp \|_{W^{1,3}(\Omega)}$,
and $\|\u-\uhp \|_{L^{3}(\Omega)}$, with respect to the third root of the number of degrees of freedom
in $\V$. As in the previous examples, we again observe exponential convergence of each of the
above error measures, as the finite element space is $hp$--adaptively modified. 
Here, we also consider the case when $N_{\max}=30$ random samples are selected; as in the previous
example, we again see that exponential convergence of each of the above error quantities is
retained, though the rate of convergence is inferior when compared to the case when all potential trial
$hp$--refinements are considered.
The final $hp$--mesh distribution is shown in Fig.~\ref{ex3-fig2}; as in the previous
examples, the adaptive algorithm clearly identifies the location of the singularity present 
within the analytical
solution $\u$, whereby $h$--refinement is undertaken.

\begin{figure}[t]
\begin{center}
%\begin{tabular}{cc}
\includegraphics[scale=0.42]{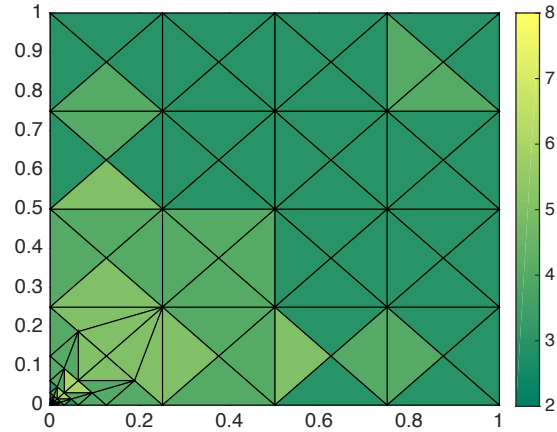} \\
(a) \\
\includegraphics[scale=0.42]{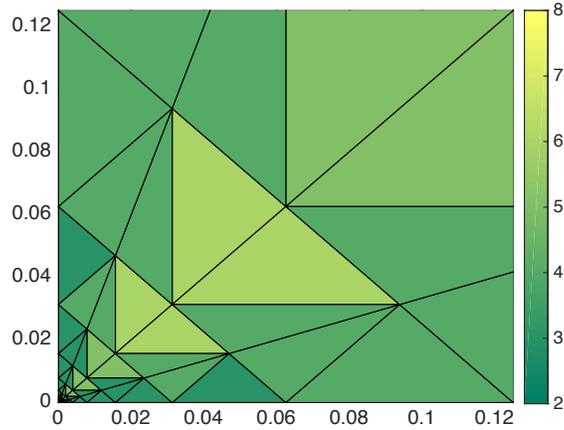} \\
(b) \\
%\end{tabular}
\end{center}
\caption{Example 3. (a) $hp$--Mesh distribution after 23 adaptive refinements;
(b) Zoom of (a).}
\label{ex3-fig2}
\end{figure}

\section{Conclusions}\label{sc:concl}

In this article, we have proposed a novel $hp$--adaptive refinement procedure for application
to the finite element approximation of convex variational problems. In particular, the underlying adaptive algorithm exploits a competitive refinement technique which 
seeks to maximise the decrease in the elemental contribution to the total energy
based on employing local $p$-- and $hp$--enrichments of the finite element space. Whilst our approach
has been successfully applied to a range of second--order quasilinear problems in both one-- and two--dimensions, we emphasise that it is immediately extensible to more general variational-based PDE problems. Future work will be concerned with exploiting anisotropic $hp$--mesh adaptation.

\bibliographystyle{amsplain}
\bibliography{literature}

\end{document}